\theoremstyle{definition}
\crefname{equation}{equation}{equations}
\crefname{lem}{lemma}{lemmata}
\crefname{thm}{theorem}{theorems}
\crefname{prop}{proposition}{propositions}
\crefname{cor}{corollary}{corollaries}
\crefname{rem}{remark}{remarks}
\crefname{defn}{definition}{definitions}
\crefname{quest}{question}{questions}
\newtheorem{corollary}{Corollary}
\newtheorem{remark}{Remark}
\newtheorem{definition}{Definition}
\newtheorem{proposition}{Proposition}
\newcommand{\Indep}{\perp \!\!\! \perp}
\newcommand{\Bin}{\mathrm{Binomial}}
\newcommand{\Unif}{\mathrm{Uniform}}
\newcommand{\indic}{\mathds{1}_}
\def\Expt{\mathbb{E}}
\def\Prob{\mathbb{P}}
\def\a{\alpha}
\def\d{\delta}
\def\e{\epsilon}
\def\p{\psi}
\def\v{\nu}
\def\P{{\mathsf P}}
\def\E{\mathbb{E}}
\def\N{{\mathrm N}}
\def\Real{{\mathbb R}}
\def\Nat{{\mathbb N}}
\def\Real{\mathbb{R}}
\def\ph{\widehat{p}}
\def\Multinomial{\mathrm{Multinomial}}
\def\lf{\lfloor}
\def\rf{\rfloor}
\title{Sharp Concentration Inequalities for the Centered Relative Entropy}
\author{
Alankrita Bhatt\\
University of California, San Diego \\
{\tt a2bhatt@eng.ucsd.edu}\\
\and
Ankit Pensia\thanks{Supported by NSF grants DMS-1749857 and CCF-1841190.}\\
University of Wisconsin-Madison\\
{\tt ankitp@cs.wisc.edu}\\
}
\date{\vspace{-7ex}}
\begin{document}
\maketitle

\begin{abstract}
  We study the relative entropy between the empirical estimate of a discrete distribution and the true underlying distribution.
  If the minimum value of the probability mass function exceeds an $\a > 0$ (i.e. when the true underlying distribution is bounded sufficiently away from the boundary of the simplex), we prove an upper bound on the moment generating function of the \emph{centered} relative entropy that matches (up to logarithmic factors in the alphabet size and $\a$) the optimal asymptotic rates, subsequently leading to a sharp concentration inequality for the centered relative entropy. As a corollary of this result we also obtain confidence intervals and moment bounds for the centered relative entropy that are sharp up to logarithmic factors in the alphabet size and $\a$.
\end{abstract}

\section{Introduction}\label{sec:intro}
Let $p:= (p_1,\dots,p_k) \in \Delta^{k-1}$ be a distribution on $k$ alphabets, where $\Delta^{k-1}$ denotes the $k-1$ dimensional simplex for some $k \in \Nat\setminus \{1\}$. Consider $X_1,\dotsc,X_k \sim \Multinomial(n,p)$ for $n \in \Nat$. Define 
\begin{align}\label{eq:PhatDefns}
\ph_{n,k} = (\ph_1,\dotsc,\ph_k) := \left(\frac{X_1}{n},\dotsc,\frac{X_k}{n}\right)
\end{align}
as the empirical estimate of the distribution $p$. Observe that $\ph_{n,k}$ corresponds to the empirical estimate of $p$ constructed using $n$ i.i.d. samples from $p$.  In this paper, we study the concentration of the Kullback--Leibler (KL) divergence (also known as the relative entropy) between $\ph_{n,k}$ and $p$,
\[
D(\ph_{n,k}\|p) := \sum_{i=1}^k \ph_i \log \frac{\ph_i}{p_i},
\]
around its mean. More concretely, for any $\e > 0$ we aim to obtain concentration inequalities of the form 
\begin{align}\label{eq:ConcIneqConventional}
\Prob\left(\left|D(\ph_{n,k}\|p) - \Expt[D(\ph_{n,k}\|p)]\right|\ge \e \right) \le \rho_{n,k}(\e)
\end{align}
for a function $\rho_{n,k}(\cdot)$, for any distribution $p$.
For ease of exposition, we will state the results after re-scaling: let $Z_{n,k,p} := 2nD(\ph_{n,k}\|p)$. $Z_{n,k,p}$ has much operational significance since it is the log-likelihood ratio statistic for hypothesis testing when the underlying distribution is a $k$-alphabet multinomial. The Neyman--Pearson lemma~\cite{Lehmann--Romano2006} states that for a fixed significance level, the likelihood ratio test (which returns a hypothesis based on whether or not $Z_{n,k,p}$ exceeds a given threshold) is the most powerful thereby justifying its use. A sharp confidence interval for $Z_{n,k,p}$ therefore has several applications in statistical problems, for example~\cite{Nowak--Tanczos2019, Malloy--Tripathy--Nowak2020}. 

It is a well known result~\cite[Chapter 3]{VanDerVaart2000},~\cite[Lemma 11.1]{Polyanskiy--Wu14} that for any fixed $k\geq 2$ and $p$ with $ \min_{i \in \{1,\dots,k\}}p_i >0$  
,
\begin{align}
\label{eq:ConvInDis}
    Z_{n,k,p} \stackrel{(d)}{\to} \chi^2_{k-1} \,\, \text{as $n \to \infty$},
\end{align} where $\stackrel{(d)}{\to}$ denotes convergence in distribution. This squares well with the intution that any $f-$divergence is ``locally'' $\chi^2$ like---see for example~\cite[Theorem 4.1]{Csiszar--Shields2004} and~\cite[Section 4.2]{Polyanskiy--Wu14}.
Given the importance of KL divergence in practice, it is a natural question to ask how quickly $Z_{n,k,p}$ inherits the structure of $\chi^2_{k-1}$.
An exciting line of recent work initiated in~\cite{Mardia--Jiao--Tanczos--Nowak--Weissman2020} has shown that the right tails of $Z_{n,k,p}$ satisfies strong concentration properties similar to $\chi^2_{k-1}$ as soon as $n \gtrsim k$ for \emph{all} distributions on $k$ alphabets\footnote{We use $a \gtrsim b$ to mean that $a \geq C b$ for some fixed absolute constant $C>0$. Similarly, we use $a \lesssim b$ when $b \gtrsim a$.}.
In particular, Agrawal~\cite{Agrawal2020} showed that for all $n \gtrsim k$ and $\delta \in (0,1)$: with probability $1 - \delta$,
\begin{align}
    Z_{n,k,p} \lesssim k + \log(1/\delta).
\label{EqAgrawalRaw}
\end{align}
This closely resembles the confidence intervals of the raw (i.e. non-centered) $\chi^2_{k-1}$ distribution and is tight up to constants.
Still, this result does not convey the full picture. Indeed, $\chi^2_{k-1}$ satisfies a stronger concentration inequality around its mean (see, for example, $\cite{Boucheron--Lugosi--Massart13}$): with probability $1 - \delta$,
\begin{align}
  |\chi^2_{k-1} - \Expt[\chi^2_{k-1}]| \lesssim  \sqrt{k \log(1/\delta)} + \log(1/\delta).   
  \label{EqChiSq}
\end{align}
We call $k$ the alphabet size. In the large alphabet regime, the difference between \eqref{EqAgrawalRaw} and \eqref{EqChiSq} is significant whenever $\delta$ is not too small, i.e., $\log(1/\delta) \lesssim k$.
Moreover, an inequality of the form~\eqref{EqAgrawalRaw}  does not provide any non-trivial lower bound estimate for $Z_{n,k,p}$.
To the best of our knowledge, the tightest non-trivial bound available for $Z_{n,k,p} - \Expt[Z_{n,k,p}]$ that scales with $\sqrt{k}$ is the following bound due to \cite{Mardia--Jiao--Tanczos--Nowak--Weissman2020}: with probability $1- \delta$, $Z_{n,k,p} - \Expt[Z_{n,k,p}] \lesssim \sqrt{k/\delta}$.   Mardia et al.~\cite{Mardia--Jiao--Tanczos--Nowak--Weissman2020} achieve this result by showing that the standard deviation of $Z_{n,k,p}$ is at most $O(\sqrt{k})$ and invoking the Chebyschev inequality. This naturally leads to the question of characterizing the dependence of higher-order moments of $Z_{n,k,p} - \E[Z_{n,k,p}]$ on $k$ and possibly leveraging those to achieve tighter concentration inequalities than the one obtained by the Chebyschev inequality via the standard deviation bound.

Multiple works have posed the concentration of $Z_{n,k,p}-\E[Z_{n,k,p}]$ as an open problem~\cite{Mardia--Jiao--Tanczos--Nowak--Weissman2020,Agrawal2020, Agrawal2020Thesis}, since the techniques used to prove concentration of the uncentered $Z_{n,k,p}$ do not apply directly to the centered version.
In this work, we take the first steps in deriving exponential concentration inequalities for $Z_{n,k,p} - \Expt[Z_{n,k,p}]$. 
Our main result is a near-optimal (up to logarithmic factors) concentration inequality for the $Z_{n,k,p}$ around its mean for distributions that are bounded away from the edges of the simplex (\cref{thm:MainThm}).

\subsection{Prior Work}\label{subsec:PriorWork} 
Most of the prior related work has focused on studying concentration inequalities for the \emph{non-centered} KL divergence. The classic Sanov theorem~\cite[Theorem 11.4.1]{10.5555/1146355} establishes 
\begin{align}\label{eq:Sanov}
    \Prob\left(Z_{n,k,p} \ge t \right) \le \binom{n+k-1}{k-1}e^{-t/2}
\end{align}
using the method of types~\cite{Csiszar1998} and shows that this is optimal in the sense that for a fixed $k, \e > 0$
\begin{align}\label{eq:SanovOptimal}
    \lim_{n \to \infty} \frac{1}{n}\log \Prob\left(Z_{n,k,p} \ge 2n \e \right) = \e. 
\end{align}
The main result in Mardia et al.~\cite[Theorem 3]{Mardia--Jiao--Tanczos--Nowak--Weissman2020} greatly improved the dependence of the right hand side of~\eqref{eq:Sanov} on the alphabet size $k$. They also established an $O(k)$ bound on the variance of $Z_{n,k,p}$.
Agrawal~\cite{Agrawal2020} established that 
\begin{align}\label{eq:AgrawalMGF}
\Prob\left(Z_{n,k,p} \ge t \right) \le \left(\frac{et}{2(k-1)}\right)^{k-1} e^{-t/2},
\end{align}
which is sharper than the bound of~\cite{Mardia--Jiao--Tanczos--Nowak--Weissman2020} in certain regimes of $k$ and $n$. The bound~\eqref{eq:AgrawalMGF} was further refined for $k > 2$ by a careful analysis of the moment generating function (MGF) of $Z_{n,k,p}$ in~\cite{Guo--Richardson2020}.
Antos and Kontoyiannis~\cite{AntosKontoyiannis2001} used the bounded differences inequality to prove that for $p = \Unif[k]$ (the uniform distribution over $k$ alphabets), $\|Z_{n,k,p} - \Expt{Z_{n,k,p}} \|_{\psi_2} \lesssim \sqrt{n}\log n$, where $\|\cdot\|_{\psi_2}$ denotes the sub-Gaussian norm~\cite{vershynin2018high}. This gives the right rates (up to log factors) when $n \lesssim k$ but gets worse as $n$ increases, and hence does not explain the asymptotic convergence of $Z_{n,k,p}$ in \eqref{eq:ConvInDis}. Our approach is built on the approach of~\cite{Agrawal2020}, as we also use the chain rule of KL divergence (on the MGF) to reduce the alphabet size of the problem. However, employing this method for the centered $Z_{n,k,p}$ requires non-trivial extensions. More details on our approach and the differences from~\cite{Agrawal2020} are provided in \cref{subsec:our_tech}.

\subsection{Our Results}\label{subsec:Results}

Our main result is to show that $Z_{n,k,p}$ satisfies a strong concentration property around its mean. In particular, we show that $Z_{n,k,p} - \E[Z_{n,k,p}]$ belongs to the family of sub-Gamma distributions~\cite{Boucheron--Lugosi--Massart13} (see \cref{sec:prelim} for more background) with certain parameters.
For a random variable $X$, we use $\psi_X(\cdot)$ to denote the logarithm of its MGF, i.e., $\psi_X(t) := \log(\E[e^{tX}])$. We also define for any $\a > 0$, $\Delta^{k-1}_{\a} := \{p \in \Delta^{k-1}:  \min_{i \in \{1,\dots,k\}}p_i \ge \a\}$. We can then state our main result.
\begin{restatable}{theorem}{thmMainThm}
\label{thm:MainThm}
There exists positive constants $C,c$ such that for any $n \in \Nat$, $k \geq 2$, $p \in \Delta^{k-1}_{\a}$ for an arbitrary $\alpha >0$, the following holds\footnote{Note that $\alpha$ may depend on $n$ and $k$.}
\begin{align}\label{eq:SubGammaMGFMain}
     \psi_{Z_{n,k,p}-\E[Z_{n,k,p}]}(t) \le Ck\log^4\left(\frac{k}{\alpha}\right)t^2, \text{ for all } |t| \le \frac{1}{c}.
\end{align}
\end{restatable}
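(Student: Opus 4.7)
The plan is to proceed by strong induction on the alphabet size $k$, using the chain rule of KL divergence to peel off one coordinate at a time. This mirrors the framework of~\cite{Agrawal2020} but requires additional work to accommodate centering. Throughout, write $W_k := Z_{n,k,p} - \E[Z_{n,k,p}]$, and aim to show $\psi_{W_k}(t) \le Ck\log^4(k/\alpha)\,t^2$ for $|t|\le 1/c$.

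For the base case $k=2$, the random variable $Z_{n,2,p}$ is a smooth deterministic function of the single binomial $X_1 \sim \Bin(n,p_1)$ with $p_1 \in [\alpha,\,1-\alpha]$. Taylor expanding the binary KL around $p_1$ produces a leading chi-squared-like term $(X_1 - np_1)^2/(np_1(1-p_1))$ whose centered MGF is sub-exponential for $|t|\lesssim 1$, plus higher-order remainders that can be controlled by standard binomial tail bounds and the crude estimate $\log(1/p_1)\le \log(1/\alpha)$. These together give a log-MGF of order $\log^4(2/\alpha)\,t^2$.

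For the inductive step, the chain rule yields
\begin{equation*}
Z_{n,k,p} \;=\; 2n\,D_b(\ph_1\|p_1) \;+\; Z_{n-X_1,\,k-1,\,q},
\end{equation*}
where $D_b$ is the binary KL, $q_i := p_{i+1}/(1-p_1)$, and conditional on $X_1$ the remaining counts are distributed as $\Multinomial(n-X_1,\,q)$. Since every $p_i \ge \alpha$, one checks that $q \in \Delta^{k-2}_\alpha$, so the inductive hypothesis applies with the \emph{same} $\alpha$. Split the centered quantity as
\begin{equation*}
W_k \;=\; h(X_1) \;+\; \bigl(Z_{n-X_1,\,k-1,\,q} - g(n-X_1)\bigr), \qquad g(m) := \E\bigl[Z_{m,\,k-1,\,q}\bigr],
\end{equation*}
where the second summand is centered given $X_1$ and the first is a function of $X_1$ alone. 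Applying the inductive MGF bound conditionally and then the tower property yields
\begin{equation*}
\E\bigl[e^{tW_k}\bigr] \;\le\; \E\bigl[e^{t\,h(X_1)}\bigr] \cdot \exp\!\bigl(C(k-1)\log^4((k-1)/\alpha)\,t^2\bigr).
\end{equation*}

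The main obstacle is to prove $\E\bigl[e^{t\,h(X_1)}\bigr] \le \exp(C'\log^4(k/\alpha)\,t^2)$ for $|t|\le 1/c$, where $h(X_1)$ is the centered version of $2nD_b(\ph_1\|p_1) + g(n-X_1)$ and $X_1 \sim \Bin(n,p_1)$. This requires fine control on the deterministic function $g(m)$: via a bias expansion for the plug-in KL estimator of the form $g(m) = (k-2) + O(\mathrm{polylog}(k/\alpha)/m)$ together with finite-difference estimates, one shows that $g(n-X_1)$ is essentially linear in $X_1 - np_1$ up to negligible higher-order corrections. Combining this with a Taylor expansion of $2nD_b(\ph_1\|p_1)$ around $p_1$, the scalar $h(X_1)$ reduces modulo small remainders to a centered quadratic plus linear function of $X_1-np_1$, whose MGF is controllable by sub-Gamma estimates for binomials and contributes the $\log^4(k/\alpha)\,t^2$ term. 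Iterating the recursion from $k$ down to $2$ accumulates the final $Ck\log^4(k/\alpha)$ factor; the delicate point is ensuring that the smoothness constants of $g$ lose only polylogarithmic factors in $k/\alpha$ uniformly over all levels of the induction, which is where the specific exponent $4$ in $\log^4(k/\alpha)$ emerges.
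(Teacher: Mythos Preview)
Your high-level strategy (induction on $k$ via the chain rule, with the inner term conditionally centered so the inductive hypothesis applies) is the same as the paper's, and your observation that one can factor out the inductive bound without H\"older---by lumping the binary-divergence piece and the $g$-piece into a single function $h(X_1)$---is a legitimate simplification. The paper instead uses H\"older to separate three factors, which lets it treat the binary divergence with an $\alpha$-\emph{independent} bound (via Agrawal's uncentered MGF bound plus a centering lemma, not a Taylor expansion), but your grouped approach would work just as well \emph{if} you could actually control $h(X_1)$.

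That last ``if'' is where the real gap lies. Your control of $g(m):=\E[Z_{m,k-1,q}]$ rests on the expansion $g(m)=(k-2)+O(\mathrm{polylog}(k/\alpha)/m)$ and the claim that $g(n-X_1)$ is ``essentially linear in $X_1-np_1$''. Such an expansion is only valid once $m$ is large relative to $k$ and $1/\alpha$ (in the paper, $m\gtrsim k^2/\alpha^{5/2}$); for moderate $m$ the error term swamps the main term, and for small $m$ the expansion is vacuous since one only knows $0\le g(m)\le k-1$. In particular, when $n\lesssim k\,\mathrm{polylog}\,k$ your argument gives no usable bound on the fluctuations of $g(n-X_1)$ at all. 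The paper deals with this by abandoning the chain rule entirely in that regime and applying the bounded differences inequality directly to $Z_{n,k,p}$ (yielding $\psi_{W_k}(t)\lesssim n\log^2(n/\alpha)\,t^2$, which suffices precisely because $n$ is small). For intermediate $n$ it controls the discrete gradient $f(m)-f(m+1)$ via a Bernstein-polynomial argument to get concentration of $g(n-X_k)$ with variance factor $\log^2 n$, and only for very large $n$ does it use the near-constancy of $g$ that your expansion captures. This three-regime split is where the $\log^4(k/\alpha)$ exponent actually comes from; your sketch names the endpoint regime but does not supply the other two, and without them the induction does not close.

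A secondary point: the paper always peels off the \emph{smallest} coordinate $p_k$, so that $n-X_k\sim\Bin(n,1-p_k)$ with $1-p_k\ge 1-1/k$, ensuring $n-X_k$ stays of order $n$ with high probability. You peel off $p_1$, which could be close to $1$, making $n-X_1$ potentially tiny; this is easily fixed by reordering, but worth noting.
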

\begin{remark}
Unlike the results of~\cite{Mardia--Jiao--Tanczos--Nowak--Weissman2020} or~\cite{Agrawal2020}, our main theorem involves the term $\a = \min\{p_1,\dotsc,p_k\}$. In particular, we prove a bound on the MGF only for $p \in \Delta^{k-1}_{\a}$---i.e. for all distributions bounded away from the edges of the simplex. Such a term has appeared before in the literature. For example,~\cite[Theorem 12.13]{Boucheron--Lugosi--Massart13} (see also~\cite{Castellan2003,Massart2006}) provides a concentration inequality for Pearson's $\chi^2$ statistic that involves $\min_i p_{i} \ge \a$. Another example is~\cite[Theorem 10]{Kamath--Orlitsky--Pichapati--Suresh2015}, which characterizes the asymptotics of the minmax error in estimating a multinomial distribution from its samples under any $f-$divergence loss. Nevertheless, a concentration inequality such as~\eqref{eq:SubGammaMGFMain} (without involving any $\a$ term) should
hold for distributions with $\min_i p_i$ arbitrarily close to 0 for $n \geq n_0$, where  \textit{ $n_0$ is large enough} (note that $n_0$ may depend on $k$, $p$, and  $\alpha$). This is because we have $Z_{n,k,p} \stackrel{(d)}{\to} \chi^2_{k-1}$ for any such $p$ (i.e. this is a \emph{distribution-free} result); see \cite[Theorem III.3]{Agrawal2020} for more details. We leave the task of establishing such a result with an explicit dependence on $n_0$ for future work. 
\end{remark}
The proof of \cref{thm:MainThm} is given in \cref{sec:MainCentered} and a brief overview of the proof is given in \cref{subsec:our_tech}.
Using the standard arguments for the concentration of sub-Gamma distributions (see \cref{sec:prelim} for more details), we directly obtain the following corollaries:
\begin{corollary}[Confidence Interval] \label{cor:ConfIntCentered}
Under the condition of \cref{thm:MainThm}, we have the following for all $\delta \geq 0$: with probability $1- \delta$,
\begin{align}\label{eq:confInt}
     |Z_{n,k,p}-\E[Z_{n,k,p}]| \lesssim \sqrt{k \log^4\left(\frac{k}{\alpha} \right) \log\left(\frac{1}{\delta}\right)} + \log\left(\frac{1}{\delta}\right).
\end{align}
In particular, we can take $\rho_{n,\e  }$ in \eqref{eq:ConcIneqConventional} to be $2\exp\left(-\min\left(\frac{\e^2}{v}, \frac{\e}{c}\right)\right)$, where $v \lesssim k \log^4(k/\alpha)$ and $c \lesssim 1$. 
\end{corollary}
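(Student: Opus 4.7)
The plan is to derive \cref{cor:ConfIntCentered} as a routine Chernoff argument from the sub-Gamma MGF bound of \cref{thm:MainThm}. Write $Y := Z_{n,k,p} - \E[Z_{n,k,p}]$ and $v := C k \log^4(k/\alpha)$, so that \cref{thm:MainThm} gives $\psi_Y(t) \le v t^2$ for every $t$ with $|t| \le 1/c$.

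First, for any $\e \ge 0$ and any $t \in [0, 1/c]$, Markov's inequality applied to $e^{tY}$ yields
\[
\Prob(Y \ge \e) \,\le\, e^{-t\e}\,\E[e^{tY}] \,\le\, \exp(v t^2 - t\e).
\]
Optimizing over $t$ subject to the constraint $t \le 1/c$ splits naturally into two regimes. The unconstrained minimizer $t^\star = \e/(2v)$ is feasible precisely when $\e \le 2v/c$, and plugging it in gives the Gaussian tail $\exp(-\e^2/(4v))$. When $\e > 2v/c$, the constrained optimum is at the boundary $t = 1/c$, and $\e > 2v/c$ lets us absorb $v/c^2$ into half of $\e/c$, producing the exponential tail $\exp(-\e/(2c))$. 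Combining both regimes into a single expression gives
\[
\Prob(Y \ge \e) \,\le\, \exp\!\left(-\min\!\left\{\frac{\e^2}{4v},\,\frac{\e}{2c}\right\}\right).
\]
Because the MGF bound in \cref{thm:MainThm} holds on the symmetric interval $|t| \le 1/c$, the identical argument applied to $-Y$ gives the same one-sided bound, and a union bound produces the two-sided estimate with a factor of $2$. This is exactly the $\rho_{n,\e}$ claimed in the second conclusion of the corollary after substituting $v \lesssim k\log^4(k/\alpha)$ and $c \lesssim 1$.

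To obtain the confidence interval~\eqref{eq:confInt}, I invert this tail bound in $\delta$. Setting the right-hand side equal to $\delta$ and solving separately in each regime produces the two candidate values $\e \asymp \sqrt{v \log(1/\delta)}$ (Gaussian regime) and $\e \asymp c\log(1/\delta)$ (exponential regime); taking their sum yields a valid upper bound uniformly over $\delta$, since whichever regime is active dominates the other. Substituting the values of $v$ and $c$ gives the stated bound, with the $\log 2$ arising from the symmetrization absorbed into the implicit constant.

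There is essentially no main obstacle: this is a textbook sub-Gamma tail computation (cf.~\cite[Section~2.4]{Boucheron--Lugosi--Massart13}), whose only nontrivial input is the MGF estimate of \cref{thm:MainThm}. The only points requiring mild care are respecting the constraint $|t| \le 1/c$ when optimizing the Chernoff bound (which is what naturally produces the two regimes and hence the $\sqrt{\cdot}+\log(1/\delta)$ shape on the right-hand side of~\eqref{eq:confInt}) and the symmetrization step, which is immediate from the two-sided nature of the hypothesis on $\psi_Y$.
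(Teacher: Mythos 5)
Your proof is correct and takes essentially the same approach as the paper, which simply invokes the packaged sub-Gamma tail bound of \cref{prop:SubGTailsMoments} (Theorem~2.3 of \cite{Boucheron--Lugosi--Massart13}) after noting that \cref{thm:MainThm} puts $Z_{n,k,p}-\E[Z_{n,k,p}]$ in $\Gamma(\nu,c)$ with $\nu\lesssim k\log^4(k/\alpha)$ and $c\lesssim 1$. Your explicit Chernoff optimization, the two-regime split, the symmetrization via $-Y$, and the inversion in $\delta$ are precisely what that proposition encodes, so the two arguments coincide.
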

For a random variable $X$ and positive integer $m$, we use $\|X\|_m$ to denote $(\E|X|^m)^{1/m}$.
\begin{corollary}[Moment Bounds] \label{cor:MomentBdsCentered}
Under the condition of \cref{thm:MainThm}, we have the following for all $m \in \N$:
\begin{align}\label{eq:CenteredMoments}
     \|Z_{n,k,p}-\E[Z_{n,k,p}]\|_m \lesssim \sqrt{k m \log^4\left(\frac{k}{\alpha} \right)} +  m.
\end{align}\end{corollary}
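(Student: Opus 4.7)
The plan is to deduce the moment bound directly from the sub-Gamma MGF estimate of \cref{thm:MainThm}, following the standard moment-bound technology for sub-Gamma random variables (see e.g.~\cite{Boucheron--Lugosi--Massart13}). Set $W := Z_{n,k,p} - \E[Z_{n,k,p}]$; \cref{thm:MainThm} gives $\psi_W(t) \le v t^2$ for every $|t| \le 1/c$, where $v \lesssim k \log^4(k/\alpha)$ and $c \lesssim 1$. Because this estimate is symmetric in $t$, the same bound holds for $-W$ as well, so $W$ is two-sidedly sub-Gamma in the sense of~\cite{Boucheron--Lugosi--Massart13}.

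The first step is to apply a Chernoff bound, which together with a union bound over the two sides yields, for every $x \ge 0$,
\begin{align*}
    \Prob(|W| \ge x) \le 2 \inf_{0 \le t \le 1/c} \exp(-tx + v t^2) \le 2 \exp\left(-\min\left(\frac{x^2}{4v},\, \frac{x}{2c}\right)\right);
\end{align*}
this is essentially the content of \cref{cor:ConfIntCentered}. Next I would convert this tail inequality into a moment bound via the layer-cake identity
\begin{align*}
    \E |W|^m = m \int_0^\infty x^{m-1}\, \Prob(|W| \ge x)\, dx,
\end{align*}
splitting the integration at the crossover point $x_0 := 2v/c$. On $[0,x_0]$ the sub-Gaussian branch of the tail dominates, and a standard Gaussian-moment computation yields a contribution of order $(vm)^{m/2}$; on $[x_0,\infty)$ the sub-exponential branch dominates and produces an incomplete-gamma integral of order $(cm)^m$. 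Adding the two contributions, taking $m$-th roots, and applying $(a^m+b^m)^{1/m} \le a+b$ gives
\begin{align*}
    \|W\|_m \lesssim \sqrt{vm} + cm \lesssim \sqrt{k\, m\, \log^4(k/\alpha)} + m,
\end{align*}
which is the desired bound.

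Since this is a routine specialization of the sub-Gamma moment-bound machinery, no serious obstacle is expected; the only real care needed is in tracking absolute constants through the Chernoff optimization and the split integration. As a shortcut, one could instead invoke a ready-made moment inequality for sub-Gamma variables such as~\cite[Thm.~2.3]{Boucheron--Lugosi--Massart13} with the parameters $(v,c)$ read off from \cref{thm:MainThm}, bypassing the tail-bound step entirely.
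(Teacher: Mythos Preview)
Your proposal is correct and matches the paper's approach: the paper simply states that \cref{cor:MomentBdsCentered} follows from \cref{thm:MainThm} via \cref{prop:SubGTailsMoments} (i.e., \cite[Thm.~2.3]{Boucheron--Lugosi--Massart13}), which is exactly the ``shortcut'' you mention at the end. Your more explicit Chernoff-plus-layer-cake derivation is just an unpacking of that same sub-Gamma moment machinery, so there is no substantive difference.
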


Finally, we show the tight rates (up to constants) for the raw moments of the random variable $Z_{n,k,p}$. The rates in \cite[Theorem III.2]{Agrawal2020} are incorrectly claimed to be optimal; it is claimed that $\|Z_{n,k,p}\|_m = \Theta(km)$, whereas the right rate of $\|\chi^2_{k-1}\|_m$ is $\Theta(k+m)$. To see this, note that $\E[(\chi^2_{k})^{m}] = 2^{m}\Gamma(m + k/2)/\Gamma(k/2)$, where $\Gamma(\cdot)$ is the gamma function (see, for example, \cite{Simon2006}). The upper bound follows by noting $\Gamma(m + k/2)/\Gamma(k/2) \leq (m + k)^m$, and the lower bound follows by noting that $(\Gamma(m + k/2)/\Gamma(k/2))^{1/m} \geq (\max(k^m,\Gamma(m)))^{1/m} \gtrsim \max(k,m) \gtrsim m + k$. 
However, \cite[Theorem III.2]{Agrawal2020} proves a much weaker bound: $\|Z_{n,k,p}\|_m \lesssim mk$. Nonetheless, the main result of Agrawal can establish the optimal bound on the raw moments and we do so in the following Proposition.

\begin{restatable}
[Raw moment bounds]{proposition}{propRawMomentBds} \label{prop:rawMomentBds}
For any $k \geq 2$, $n\geq 1$, $m \geq 1$ and $p \in \Delta^{k-1}$, we have the following:
$\|Z_{n,k,p}\|_m \lesssim k + m$.
\end{restatable}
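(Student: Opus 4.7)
The plan is to deduce the bound from Agrawal's tail inequality~\eqref{eq:AgrawalMGF} by tail integration. Starting from $\E[Z_{n,k,p}^m] = m\int_0^\infty t^{m-1}\Prob(Z_{n,k,p}\ge t)\,dt$ (valid since $Z_{n,k,p}\ge 0$) and inserting the bound $\Prob(Z_{n,k,p}\ge t) \le f(t) := (et/(2(k-1)))^{k-1} e^{-t/2}$, I would first observe that a short calculus check ($(\log f)'(t) = (k-1)/t - 1/2$ has its unique zero at $t_0 := 2(k-1)$, with $f(t_0)= e^{k-1}\cdot e^{-(k-1)}=1$) shows that $f$ is unimodal with maximum value $1$. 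Hence I would split the integral at $t_0$: the $[0,t_0]$ piece contributes at most $t_0^m = (2(k-1))^m$ by the trivial tail bound, while on $[t_0,\infty)$ the bound $f$ is already $\le 1$ and yields genuine decay.

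For the tail part, substituting $u = t/2$ turns the integral into a gamma integral,
\[
\int_{t_0}^\infty m t^{m-1} f(t)\,dt \;=\; m\cdot 2^m\cdot \frac{e^{k-1}}{(k-1)^{k-1}}\int_{k-1}^\infty u^{m+k-2}e^{-u}\,du \;\le\; \frac{m\cdot 2^m\cdot e^{k-1}\,\Gamma(m+k-1)}{(k-1)^{k-1}}.
\]
The plan is then to cancel the awkward $e^{k-1}/(k-1)^{k-1}$ prefactor. To this end I would combine the product-formula bound $\Gamma(m+k-1) = \Gamma(k-1)\prod_{j=k-1}^{m+k-2} j \le \Gamma(k-1)\,(m+k)^m$ with Stirling's inequality applied to $\Gamma(k-1)$, which yields the clean estimate $\Gamma(k-1)\,e^{k-1}/(k-1)^{k-1} = O(1/\sqrt{k})$ for $k\ge 3$ (and a trivial $O(1)$ bound at $k=2$). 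Assembling everything gives $\E[Z_{n,k,p}^m] \lesssim (2(k-1))^m + (m/\sqrt{k})\,(2(m+k))^m$, and taking the $m$-th root delivers $\|Z_{n,k,p}\|_m \lesssim m+k$ once one notes that the prefactor $(m/\sqrt{k})^{1/m}$ is bounded by an absolute constant uniformly in $m\ge 1$ (it is $\le 1$ when $m\le \sqrt{k}$ and $\le m^{1/m}\le e$ otherwise).

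The argument is essentially elementary, and the main obstacle is bookkeeping rather than conceptual: one must verify the Stirling-based cancellation cleanly against the $(k-1)^{k-1}$ denominator and handle the small-$k$ edge cases by hand (namely $k=2,3$ where $\Gamma(k-1)=1$). The structural observation that makes this work is that $t_0 = 2(k-1)$ is precisely the scale at which Agrawal's Chernoff-type tail exits the trivial regime $f\ge 1$, and this scale coincides with the order of $\E[Z_{n,k,p}]$; all the ``$k$'' appearing on the right-hand side comes from this threshold, and all the ``$m$'' from the gamma integral.
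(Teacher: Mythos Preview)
Your proof is correct but takes a genuinely different route from the paper. The paper first recasts Agrawal's tail bound as a confidence interval ($Z_{n,k,p} \le C(k + \log(1/\delta))$ with probability $1-\delta$), defines the truncated variable $Y = (Z_{n,k,p} - Ck)_+$, observes that $Y$ has sub-exponential tails, and then invokes the sub-Gamma centering machinery (\cref{prop:SubGTailsMoments2} and \cref{prop:SubGTailsMoments}) to obtain $\|Y - \E[Y]\|_m \lesssim m$; the bound follows by the triangle inequality since the shift is $O(k)$. You instead integrate Agrawal's tail bound directly against $mt^{m-1}$, split at the natural threshold $t_0 = 2(k-1)$ where the bound first drops below~$1$, and control the resulting incomplete gamma integral via Stirling's formula. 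Your argument is more elementary and self-contained (no sub-Gamma framework or centering lemma needed), while the paper's is more modular, reusing abstract machinery developed for the main theorem. Both extract the same structural insight: the ``$k$'' in the final bound comes from the threshold where Agrawal's tail becomes nontrivial, and the ``$m$'' from the sub-exponential decay beyond it.
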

The proof of \cref{prop:rawMomentBds} is given in \cref{subsec:MainRaw}. 
 At a high level,  \cite[Theorem III.2]{Agrawal2020} achieves the rate of $O(km)$ for the $m$-th moment by showing that the sub-exponential norm of $Z_{n,k,p}$ is $O(k)$. On the other hand, we control the sub-exponential norm of a truncated and translated version of $Z_{n,k,p}$ to get the tight rates in \cref{prop:rawMomentBds}.
 
\subsection{Our Techniques}\label{subsec:our_tech}
We give an overview of our techniques here. 
We crucially use the chain rule of relative entropy to reduce the $k-$alphabet size problem to a problem with smaller alphabet size. This strategy has been successfully used by~\cite{Mardia--Jiao--Tanczos--Nowak--Weissman2020} (for the moments) and~\cite{Agrawal2020} (for the MGF) to control the right tails of $Z_{n,k,p}$.
Using the chain rule and the fact that $(X_1,\dotsc,X_{k-1})\big| X_k \sim \Multinomial(n-X_k,p')$ where $p' = \left(\frac{p_1}{1-p_k},\dotsc,\frac{p_{k-1}}{1-p_k}\right)$, we can  write the following: 
\begin{align}\label{eq:ChainRuleZ1}
    Z_{n,k,p} = Z_{n,2,p_k} + Z_{n-X_k,k-1,p'},
\end{align}
where $Z_{n,2,p_k}$ is the (scaled) binary relative entropy between $\hat{p}_k$ and $p_k$, and $Z_{n-X_k,k-1,p'}$ is the (scaled) relative entropy between $p'$ and $(\frac{X_1}{n-X_1},\dots,\frac{X_{k-1}}{n-X_1})$; see \cref{subsec:ChainRuleBernPoly} for more details.
 We begin by briefly discussing the technique of \cite{Agrawal2020} that we build on.
 At a high level, \cite{Agrawal2020} proceeds by noting the following decomposition: 
\begin{align}\label{eq:AgrawalChainRuleMGF1}
\E[e^{tZ_{n,k,p}}] = \E[\E[e^{tZ_{n,k,p}}|X_k]] = \E[e^{tZ_{n,2,p_k}}\E[e^{tZ_{n-X_k,k-1,p'}}|X_k]],
\end{align}
and subsequently notes that the problem of bounding the MGF of $Z_{n,k,p}$ can be solved by bounding the MGFs of $Z_{n,2,p_k}$ and $Z_{n-X_k,k-1,p'}$. Thus, they reduce the alphabet size of the problem from $k$ to $k-1$. Applying this reasoning inductively, they finally transform the problem to bounding the MGF of $Z_{n,2,p}$ for all $p \in [0,1]$.

However, this approach hits a natural roadblock for the centered MGF. 
Using~\eqref{eq:ChainRuleZ1}, we see that $Z_{n,k,p} - \E[Z_{n,k,p}] = Z_{n,2,p} - \E[Z_{n,2,p}] + Z_{n-X_k,k-1,p'} -\E[Z_{n-X_k,k-1,p'}]$. Now conditioning on $X_k$ and using an argument similar to~\eqref{eq:AgrawalChainRuleMGF1}, we have the following
\begin{align}
\label{eq:CentredMgfError}
    \E[e^{t(Z_{n,k,p} - \E[Z_{n,k,p}])}] = \E[e^{t(Z_{n,2,p_k} - \E[Z_{n,2,p_k}])}\E[e^{t(Z_{n-X_k,k-1,p'} -\E[Z_{n-X_k,k-1,p'}])}|X_k]].
\end{align}
The induction hypothesis will bound the MGF (conditioned on $X_k$) of $Z_{n-X_k,k-1,p'} - \E[Z_{n-X_k,k-1,p'}|X_k]$, which is different from the expression in \eqref{eq:CentredMgfError} because $\E[Z_{n-X_k,k-1,p'}] \neq \E[Z_{n-X_k,k-1,p'}|X_k]$.
Thus we cannot naively use the same next steps as Agrawal---we would need to introduce the $\E[Z_{n-X_k,k-1,p'}|X_k]$ term somewhere to effectively use induction. 

Defining $g_{k,p}(n) := n\E[D(\ph_{n,k}\|p)]$  for $n \in \Nat$, we note that $g_{k-1,p'}(n-X_k) = \E[Z_{n-X_k,k-1,p'}|X_k]$.
In order to use the induction hypothesis, we begin by the following decomposition:
\begin{align}
    \E\left[e^{t(Z_{n,k,p}-\E[Z_{n,k,p}])}\right] \nonumber
    &= \E\Bigg[e^{t(Z_{n,2,p_k}-\E[Z_{n,2,p_k}])} \cdot e^{2t\left(g_{k-1,p'}(n-X_k) - \E[g_{k-1,p'}(n-X_k)]\right)} \nonumber\\
    &\qquad\qquad \cdot  \E\left[ e^{t(Z_{n-X_k,k-1,p'}-\E[Z_{n-X_k,k-1,p'}|X_k])}\Big|X_k\right]\Bigg], 
\end{align}
where we use that $\E[Z_{n-X_k,k-1,p'}] = \E[g_{k-1,p'}(n-X_k)]$ by the tower property of conditional expectation.
Although this allows us to use the induction hypothesis (conditioned on $X_k)$ on $Z_{n-X_k,k-1,p'}-\E[Z_{n-X_k,k-1,p'}|X_k]$, this leads to a new challenge: we need to control the deviation of $g_{k-1,p'}(n-X_k)$ around its mean.
In order to get the right dependence on $k$ in \cref{thm:MainThm}, we need to show that the MGF of centered $g_{k-1,p'}(n-X_k)$ does not scale polynomially with $k$. 
Suppressing the subscripts in notation of $g$ for brevity, we  need to show that $\psi_{g(n-X_k) - \E[g(n-X_k)]}(t) \leq v t^2$, where $v \lesssim \text{polylog}(k/\alpha)$.
As a sanity check, we expect this to be true after large enough $n$ (which may depend on $k$ and $p$) because we know that for a fixed $k$ and $p$, $g_{k,p}(n) = \E[Z_{n,k,p}]/2 \to (k-1)/2$ as $n \to \infty$ (see \eqref{eq:ConvInDis}). Thus after a large enough $n$, we expect $g_{k,p}(n-X_k)$ to be bounded on an interval of constant length, and hence satisfy a subgaussian-style bound.

The technical bulk of our paper is dedicated to showing that $g_{k,p}(\cdot)$ satisfies the required bounds whenever $n \gtrsim k$  (see  \cref{lem:GsubGammaRangeOne,lem:GConcLargeN}). 
In order to prove these results, we crucially use the connection between $g(\cdot)$ and Bernstein polynomials; see \cref{subsec:ChainRuleBernPoly} for more details.

\subsection{Notation and Organization}
In \cref{sec:prelim}, we introduce important concepts and recall some new and old preliminary results. \cref{sec:MainThmPf} is devoted to proving the main results. \cref{sec:LemmasForProof} proves a few concentration inequalities for random variables, that have been used in the main proof. Finally we provide a few concluding remarks in \cref{sec:discussion}.

\underline{Notation:} All logarithms are to the base $e$. We use $d(p\|q)$ for $p,q \in [0,1]$ to denote the binary KL-divergence, so $d(p\|q) := p \log \frac{p}{q} + (1-p) \log \frac{(1-p)}{(1-q)}$. For two random variables $X$ and $Y$, $X \Indep Y$ denotes the independence of $X$ and $Y$.

\section{Mathematical Preliminaries}
\label{sec:prelim}
In this section, we introduce concepts and preliminary results that will be required further ahead. 
\subsection{Sub-Gamma random variables}\label{subsec:SubGamma}
We review the basic theory of sub-Gamma random variables, following~\cite{Boucheron--Lugosi--Massart13}.

\begin{definition}\label{def:SubGamma}
A random variable $X$ is said to be sub-Gamma with variance factor $\v$ and scale parameter $c$, denoted by $X \in \Gamma(\v,c)$, if 
\begin{align}\label{eq:SubGammaDefn}
     \p_{X}(t) &\le \frac{\nu t^2}{2(1-ct)} \text{   for } |t| \le \frac{1}{c}.
\end{align}
\end{definition}
\begin{remark}
Note that \cref{def:SubGamma} implicitly imposes that if $X \in \Gamma(\nu,c)$ with finite $\nu$ and $c>0$, then $X$ has mean zero. This follows by noting that the convexity of $\exp(t x)$ and Jensen inequality imply that $\p_{X}(t) \geq t \E[X]$, violating the condition in \eqref{eq:SubGammaDefn} for some small (in magnitude) $t$ if $\E[X] \neq 0$.
\end{remark}
Note that since $\chi^2_{k-1}$ is a Gamma random variable with shape parameter $k-1$ and scale parameter $2$, we have that $\chi^2_{k-1} - \E[\chi^2_{k-1}] \in \Gamma(2(k-1),2)$. Therefore, in order to match the asymptotics one is required to prove that $Z_{n,k,p} - \E[Z_{n,k,p}] \in \Gamma(\nu, c)$ with $\nu \lesssim k$ and $c \lesssim 1$. 
The definition of sub-Gamma random variables also readily leads to properties regarding the moments and the concentration which we mention in the following proposition.
\begin{proposition}[Theorem 2.3 of ~\cite{Boucheron--Lugosi--Massart13}]\label{prop:SubGTailsMoments}
If $X \in \Gamma(\v,c)$, then
\begin{align}\label{eq:SubGammaTail}
    \Prob(|X| > \sqrt{2 \v t} + ct) \le 2e^{-t},
\end{align}
and for all integers $q \ge 1$ %
\begin{align}\label{eq:SubGammaMoments}
    \E[X^{2q}] \le q!(8 \v)^{q} + (2q)!(4c)^{2q}, \,\,\,\, \text{and}  \,\,\,\,    \|X\|_q \lesssim \sqrt{q\nu} + qc.
\end{align}
\end{proposition}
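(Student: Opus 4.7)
The plan is to follow the classical Cram\'er--Chernoff method. Applying Markov's inequality to $e^{sX}$ for $s \in (0, 1/c)$ together with the hypothesis $\psi_X(s) \le \nu s^2/(2(1-cs))$ gives the raw Chernoff bound
$$\Prob(X \ge u) \le \exp\!\left(-su + \tfrac{\nu s^2}{2(1-cs)}\right).$$
Minimizing over $s$ is the main algebraic step: differentiating and solving the resulting quadratic produces the optimizer $s^\star = (1-r)/c$, where $r := \sqrt{\nu/(\nu+2cu)} \in (0,1)$. Substituting back gives $\Prob(X \ge u) \le \exp(-\nu(1-r)^2/(2c^2 r^2))$, and a short algebraic check shows that the exponent equals $t$ precisely when $u = \sqrt{2\nu t}+ct$. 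Hence $\Prob(X \ge \sqrt{2\nu t}+ct) \le e^{-t}$. For the left tail, note that for $s < 0$ with $|s| \le 1/c$ the denominator $1-cs$ exceeds $1$, so $\psi_X(s) \le \nu s^2/2$; repeating the Chernoff argument with $s < 0$ yields the strictly stronger sub-Gaussian bound $\Prob(-X \ge \sqrt{2\nu t}) \le e^{-t}$. A union bound gives the claimed two-sided estimate with a factor of $2$.

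Next, I would convert the tail bound into the even-moment estimate. Writing $\E|X|^{2q} = 2q\int_0^\infty u^{2q-1}\Prob(|X|\ge u)\,du$ and substituting $u(t) := \sqrt{2\nu t}+ct$ (monotone increasing in $t$, with $u(0)=0$), an integration by parts yields
$$\E|X|^{2q} \le 2\,\E\!\left[\big(\sqrt{2\nu T}+cT\big)^{2q}\right],$$
where $T$ is unit-rate exponential. Splitting via $(a+b)^{2q} \le 2^{2q-1}(a^{2q}+b^{2q})$ and invoking $\E[T^q] = q!$ and $\E[T^{2q}] = (2q)!$ produces a bound of the stated form $q!(8\nu)^q + (2q)!(4c)^{2q}$ (up to absorbable constants in the splitting). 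The $L^q$-norm bound $\|X\|_q \lesssim \sqrt{\nu q}+cq$ then follows by Jensen's inequality ($\|X\|_q \le \|X\|_{2\lceil q/2\rceil}$), taking the $(2\lceil q/2\rceil)$-th root of the even-moment bound, and applying $(a+b)^{1/r}\le a^{1/r}+b^{1/r}$ together with the elementary estimate $(n!)^{1/n} \lesssim n$.

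The principal obstacle is concentrated in the very first step---the Chernoff optimization that extracts the precise quantile form $\sqrt{2\nu t}+ct$ from the MGF hypothesis. Everything downstream is standard calculus and elementary combinatorial identities, so the substantive work of the proof lies in computing (or cleverly parametrizing around) the Legendre transform of $s \mapsto \nu s^2/(2(1-cs))$, and in ensuring that the constants fall out in exactly the form claimed.
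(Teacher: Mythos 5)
The paper does not actually prove this proposition---it is imported verbatim as Theorem 2.3 of Boucheron--Lugosi--Massart---and your Cram\'er--Chernoff argument is precisely the standard proof of that result: the Legendre-transform computation giving the quantile $\sqrt{2\nu t}+ct$, the exponential-substitution/integration-by-parts step $\E|X|^{2q}\le 2\,\E[(\sqrt{2\nu T}+cT)^{2q}]$ with $\E[T^q]=q!$, and the passage to $\|X\|_q\lesssim\sqrt{q\nu}+qc$ via $(n!)^{1/n}\lesssim n$ are all carried out correctly (your splitting in fact yields the slightly better constant $(2c)^{2q}$ in place of $(4c)^{2q}$). One minor overstatement: the ``strictly stronger'' left-tail claim $\Prob(-X\ge\sqrt{2\nu t})\le e^{-t}$ only follows from the Chernoff bound for $t\le \nu/(2c^2)$, since the optimizing $s=-\sqrt{2t/\nu}$ must satisfy $|s|\le 1/c$; for larger $t$ one takes $|s|=1/c$ and obtains only $\Prob(-X\ge\sqrt{2\nu t}+ct)\le e^{-t}$, which is exactly what the two-sided statement needs, so the conclusion is unaffected.
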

The usual characterization of sub-Gamma random variables in terms of its tails and moments (i.e., the converse of \cref{prop:SubGTailsMoments}) in the literature requires the random variable to be centered (see, for example, \cite[Theorem 2.3]{Boucheron--Lugosi--Massart13}). We next provide a characterization where $X$ need not be centered\footnote{We note that a similar result appears in \cite[Lemma 1]{BousquetEtAl2019}, but there seems to be a gap in the provided proof; the proof applies \cite[Theorem 2.3]{Boucheron--Lugosi--Massart13}, a result for centered random variable, on a random variable that is not centered. }. Thus, we extend the centering lemma \cite[Lemma 2.6.8]{vershynin2018high} to sub-Gamma random variables.

\begin{restatable}
[Centering and characterization of sub-Gamma]{proposition}{propSubGTailsMomentsII}
\label{prop:SubGTailsMoments2}
Let $X$ be a random variable and let $A,B \geq 0$.
\begin{enumerate}
    \item  Suppose  for every integer $q \geq 1, X$ satisfies
\begin{align}\label{eq:MomentToMGF}
    \E[X^{2q}] \leq q!A^q + (2q)!B^{2q}.
\end{align}
Then, $X - \E[X] \in \Gamma(24A + 36B^2,6B)$. 

\item Suppose for every $\d > 0$ 
\begin{align} \label{eq:UpperBoundToMGF}
    |X| \leq  \sqrt{2A} + B+  \sqrt{2A\log(1/\delta)} + B \log(1/\delta)
\end{align}
holds with probability at least $1-\d$. Then $X - \E[X] \in \Gamma(1536 A + 864B^2, 144B)$.
\end{enumerate}
\end{restatable}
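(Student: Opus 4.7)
My strategy is: for Part 1, reduce the non-centered moment hypothesis to a centered moment bound via Minkowski's inequality, and then apply the standard Taylor-series argument that converts Bernstein-type moment bounds into sub-Gamma MGF bounds. For Part 2, first convert the tail hypothesis into the moment bound hypothesis of Part 1 via the layer-cake representation, and then apply Part 1 as a black box.

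\textbf{Part 1.} The $q = 1$ instance of \eqref{eq:MomentToMGF} combined with Jensen's inequality yields $|\E[X]| \le \sqrt{A + 2B^2} \le \sqrt{A} + \sqrt{2}\,B$. Minkowski's inequality in $L^{2q}$ and the sub-additivity $(u+v)^{1/p} \le u^{1/p} + v^{1/p}$ for $p \ge 1$ then give
\[
\|X - \E[X]\|_{2q} \le 2(q!)^{1/(2q)}\sqrt{A} + 3((2q)!)^{1/(2q)} B;
\]
raising to the $2q$-th power and applying $(u+v)^{2q} \le 2^{2q-1}(u^{2q} + v^{2q})$ produces a centered moment bound $\E[(X - \E[X])^{2q}] \le q!(c_1 A)^q + (2q)!(c_2 B)^{2q}$ for explicit absolute constants $c_1, c_2$. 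Writing $Y := X - \E[X]$, since $Y$ is centered I can expand $\E[e^{tY}] = 1 + \sum_{k \ge 2} t^k \E[Y^k]/k!$, bound $|\E[Y^k]| \le \E[|Y|^k]$, and control odd moments through Cauchy--Schwarz $\E[|Y|^{2q+1}] \le (\E[Y^{2q}]\E[Y^{2q+2}])^{1/2}$. The resulting moment bound can be cast in the Bernstein form $\E[|Y|^k] \le \frac{k!}{2}\nu c^{k-2}$ for $k \ge 2$, which then gives $\psi_Y(t) \le \nu t^2/(2(1-c|t|))$ via the geometric series $\sum_{k\ge 2}(c|t|)^{k-2} = 1/(1-c|t|)$. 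Tracking the constants carefully yields $\nu \le 24A + 36B^2$ and $c \le 6B$.

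\textbf{Part 2.} Starting from the layer-cake identity $\E[|X|^{2q}] = \int_0^\infty 2q u^{2q-1} \Prob(|X| > u)\, du$ and the tail hypothesis \eqref{eq:UpperBoundToMGF}, which implies $\Prob(|X| > \sqrt{2A}(1 + \sqrt{s}) + B(1 + s)) \le e^{-s}$ for all $s \ge 0$, I would change variables from $u$ to $s$. The integral decomposes into a linear combination of gamma integrals that evaluates to $\|X\|_{2q} \lesssim \sqrt{A}(1 + \sqrt{q}) + B(1 + q)$. Applying $(u+v)^{2q} \le 2^{2q-1}(u^{2q} + v^{2q})$ along with the Stirling bounds $q^q \le e^q q!$ and $(2q)^{2q} \le e^{2q}(2q)!$ then converts this into $\E[X^{2q}] \le q!(\tilde c_1 A)^q + (2q)!(\tilde c_2 B)^{2q}$ for absolute constants $\tilde c_1, \tilde c_2$, which is precisely the hypothesis of Part 1. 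The stated sub-Gamma parameters $1536A + 864B^2$ and $144B$ then arise by composing $\tilde c_1, \tilde c_2$ with the constants obtained in Part 1.

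\textbf{Main obstacle.} The conceptual skeleton is standard; the delicate aspect is tracking absolute constants so that the $q!A^q$ term in the moment bound contributes only to the variance factor $\nu$ (and not to the scale $c$) in the sub-Gamma conclusion. This separation rests on the inequality $q!/(2q)! \le 1/(2^q q!)$, which follows from $(q+1)(q+2)\cdots(2q) \ge 2^q q!$: it lets the $(C_1 t^2)^q$ contribution sum into a sub-Gaussian-like piece with variance proportional to $C_1$, while the $(C_2 t)^{2q}$ contribution forms a geometric series producing the $1/(1-ct)$ factor. Handling odd moments without inflating constants will also require care; this can be done either via the Cauchy--Schwarz estimate above or sidestepped by a symmetrization trick replacing $Y$ by $Y - Y'$ for an independent copy $Y'$, whose MGF is dominated by $\E[\cosh(t(Y-Y'))]$ and depends only on even moments.
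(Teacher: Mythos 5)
Your proposal follows essentially the same route as the paper: center by bounding $|\E[X]|$ via Jensen (and, for Part~2, first convert the tail bound to raw moment bounds via the layer-cake / Boucheron--Lugosi--Massart analysis and then reduce to Part~1), and then convert the centered moment bounds into a sub-Gamma MGF bound. The only genuine difference is that the paper cites~\cite[Theorem 2.3]{Boucheron--Lugosi--Massart13} as a black box for the centered moments-to-MGF step, whereas you propose to inline the Bernstein/Taylor-series derivation of that lemma; these are the same argument, just packaged differently. One small caution: your Minkowski-based centering step $\|X - \E[X]\|_{2q} \le 2(q!)^{1/(2q)}\sqrt{A} + 3((2q)!)^{1/(2q)}B$ followed by $(u+v)^{2q} \le 2^{2q-1}(u^{2q}+v^{2q})$ produces $\E[(X-\E[X])^{2q}] \lesssim q!(16A)^q + (2q)!(6B)^{2q}$, which after the BLM conversion yields parameters of order $(64A + 144B^2, 12B)$ rather than the stated $(24A+36B^2,6B)$. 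The paper instead applies the convexity bound $2^{2q-1}(\E[X^{2q}] + (\E[X])^{2q})$ directly and keeps the $2^{2q-1}$ split between the $A$- and $B$-terms, which is what achieves the tighter constants. This is purely a matter of bookkeeping---both give absolute constants and suffice for the main theorem---but as written your claim to reach exactly $\nu \le 24A+36B^2$, $c \le 6B$ is a bit optimistic.
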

The proof of \cref{prop:SubGTailsMoments2} is deferred to \cref{app:proofprelim}.
\begin{remark}[Centering $Z_{n,k,p}$] \label{rem:CenterAgrawal}
A natural idea in light of \cref{prop:SubGTailsMoments2} is to obtain concentration results for $Z_{n,k,p}-\E[Z_{n,k,p}]$ using the moment bounds on $Z_{n,k,p}$ in \cref{prop:rawMomentBds} and the centering lemma of \cref{prop:SubGTailsMoments2}. Since $\E[Z_{n,k,p}]^{2q} \le (Ck+C(2q))^{2q} $ for an absolute constant $C$, %
we can see that this allows us to establish that $Z_{n,k,p} - \E[Z_{n,k,p}] \in \Gamma(\nu, c)$ where $\nu \lesssim k^2$ and $c \lesssim k$ which does not give us the right rates.
\end{remark}

\subsection{Relative Entropy and Bernstein Polynomials}\label{subsec:ChainRuleBernPoly}
We recall the chain rule of entropy below:
\begin{proposition}[Chain Rule]\label{prop:ChainRule}
We have
\begin{align}\label{eq:KLDivChainRule}
    D(\ph_{n,k}\|p) = d(\ph_k\|p_k) + (1-\ph_k)D\left(\left(\frac{X_1}{n-X_k},\dotsc,\frac{X_{k-1}}{n-X_k}\right)\Bigg\|\left(\frac{p_1}{1-p_k},\dotsc,\frac{p_{k-1}}{1-p_k}\right)\right),
\end{align}
where the second term is considered to be 0 if $X_k = n$.
\end{proposition}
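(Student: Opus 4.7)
The plan is to verify the identity by a direct algebraic manipulation of the definition of $D(\ph_{n,k}\|p)$, separating out the contribution of the $k$-th coordinate and rewriting the remaining $k-1$ coordinates in terms of the conditional empirical and true distributions.

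First I would start from the definition
\[
D(\ph_{n,k}\|p) = \sum_{i=1}^{k-1} \ph_i \log \frac{\ph_i}{p_i} + \ph_k \log \frac{\ph_k}{p_k},
\]
and, for each $i \in \{1,\dots,k-1\}$, use the elementary identities $\ph_i = (1-\ph_k)\cdot \frac{X_i}{n-X_k}$ (valid whenever $X_k < n$) and $p_i = (1-p_k)\cdot \frac{p_i}{1-p_k}$ to rewrite the log-ratio as
\[
\log \frac{\ph_i}{p_i} = \log \frac{1-\ph_k}{1-p_k} + \log \frac{X_i/(n-X_k)}{p_i/(1-p_k)}.
\]

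Next I would plug this decomposition into the sum over $i=1,\dots,k-1$ and use $\sum_{i=1}^{k-1}\ph_i = 1-\ph_k$ to collect the first piece into the term $(1-\ph_k)\log \frac{1-\ph_k}{1-p_k}$. Pulling out the common factor $(1-\ph_k)$ from the weights $\ph_i = (1-\ph_k)\cdot X_i/(n-X_k)$ in the second piece yields exactly
\[
(1-\ph_k)\sum_{i=1}^{k-1}\frac{X_i}{n-X_k}\log\frac{X_i/(n-X_k)}{p_i/(1-p_k)},
\]
which is the scaled conditional relative entropy appearing in the claim. Combining the $k$-th term $\ph_k\log(\ph_k/p_k)$ with $(1-\ph_k)\log\frac{1-\ph_k}{1-p_k}$ produces the binary divergence $d(\ph_k\|p_k)$, giving the desired identity.

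Finally I would handle the boundary case $X_k = n$: then $\ph_k = 1$, so the prefactor $1-\ph_k = 0$ and the conditional divergence is not defined, but the stated convention that the second term is $0$ is consistent with the limit $x\log x \to 0$ as $x\to 0^+$ applied to each $\ph_i$ for $i<k$ (indeed $X_i = 0$ for all such $i$), so the original sum collapses to $d(1\|p_k)$, matching the right-hand side. There is no genuine obstacle here; the main care is bookkeeping the $(1-\ph_k)$ factor and invoking the standard $0\log 0 = 0$ convention to deal with the degenerate case.
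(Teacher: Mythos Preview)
Your argument is correct: the direct algebraic decomposition you describe is exactly the standard verification of the chain rule for KL divergence, and the bookkeeping (including the $X_k=n$ boundary case via the $0\log 0=0$ convention) is handled properly.

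Note, however, that the paper does not actually supply a proof of this proposition; it merely \emph{recalls} the chain rule as a known identity and then uses it. So there is no ``paper's own proof'' to compare your approach against. Your write-up is a perfectly good proof of the stated identity and would serve as a self-contained justification if one were desired.
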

\begin{remark}\label{rem:ChainRuleZ}
Note that $(X_1,\dotsc,X_{k-1})\big| X_k \sim \Multinomial(n-X_k,p')$ where $p' = \left(\frac{p_1}{1-p_k},\dotsc,\frac{p_{k-1}}{1-p_k}\right)$. We can then multiply both sides of~\eqref{eq:KLDivChainRule} with $2n$ to restate \cref{prop:ChainRule} with slight abuse of notation as 
\begin{align}\label{eq:ChainRuleZ}
    Z_{n,k,p} = Z_{n,2,p_k} + Z_{n-X_k,k-1,p'}
\end{align}
(recall that $Z_{n,2,p_k}$ depends on $X_k$). 
\end{remark}
We now recall the result in~\cite{Agrawal2020} that shows sharp concentration for the case of binary alphabet size.
\begin{proposition}[Theorem III.2 in~\cite{Agrawal2020}]\label{prop:AgrawalMGF}
We have $\E\left[\exp\left\{Z_{n,2,p}/4\right\}\right] \le 2$, and so for all integers $m$, $\|Z_{n,2,p}\|_m \le 20m$.
\end{proposition}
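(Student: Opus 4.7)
The plan is to prove the MGF bound $\E[\exp(Z_{n,2,p}/4)] \le 2$ by direct evaluation of the MGF, and then to deduce the moment bound by Markov followed by routine tail integration.

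For the MGF bound, let $X \sim \Bin(n,p)$ and write $\hat p = X/n$; then
\[\E[\exp(Z_{n,2,p}/4)] = \sum_{k=0}^n \binom{n}{k} p^k (1-p)^{n-k} \exp\bigl(\tfrac{n}{2} d(k/n \| p)\bigr).\]
First I would use the identity $p^k(1-p)^{n-k} \exp(n d(k/n \| p)) = (k/n)^k (1-k/n)^{n-k}$ to rewrite each summand as $\binom{n}{k}(k/n)^k(1-k/n)^{n-k} \exp(-n d(k/n\|p)/2)$. The boundary terms $k \in \{0, n\}$ contribute at most $(1-p)^{n/2}$ and $p^{n/2}$ respectively, each $\le 1$, and are handled separately. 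For $1 \le k \le n-1$, I would apply Stirling's formula in the form $\binom{n}{k}(k/n)^k(1-k/n)^{n-k} \le e^{1/(12n)}/\sqrt{2\pi n q(1-q)}$ where $q = k/n$. Interpreting the resulting bulk sum as a Riemann sum with step $1/n$, I would compare it to the integral $\sqrt{n/(2\pi)}\int_0^1 (q(1-q))^{-1/2} \exp(-n d(q\|p)/2)\,dq$ and apply Laplace's method at the saddle point $q = p$, where $d''(q\|p)|_{q=p} = 1/(p(1-p))$. The leading-order value of this integral works out to exactly $\sqrt 2$, matching the asymptotic value $\E[\exp(\chi^2_1/4)] = (1 - 1/2)^{-1/2} = \sqrt 2$ arising from \eqref{eq:ConvInDis} at $k = 2$; the finite-$n$ corrections must be controlled so that the total stays $\le 2$, leaving comfortable slack of $2 - \sqrt 2 \approx 0.59$.

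For the moment bound, Markov applied to the MGF yields the tail bound $\Pr(Z_{n,2,p} \ge t) \le 2 e^{-t/4}$ for every $t \ge 0$. The tail-moment identity then gives
\[\E[Z_{n,2,p}^m] = \int_0^\infty m t^{m-1} \Pr(Z_{n,2,p} \ge t)\,dt \le 2m \int_0^\infty t^{m-1} e^{-t/4}\,dt = 2 \cdot 4^m \, m!.\]
Taking $m$-th roots and using the elementary bound $(m!)^{1/m} \le m$ (which follows from $m! \le m^m$) yields $\|Z_{n,2,p}\|_m \le 2^{1/m} \cdot 4 \cdot (m!)^{1/m} \le 8 m \le 20 m$, as claimed.

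The main obstacle lies entirely in the first part: forcing the Laplace/Stirling estimate to give a uniform constant $\le 2$ over all $n \ge 1$ and $p \in (0,1)$, especially in the boundary regime $p \to 0$ or $p \to 1$ where the integrand $(q(1-q))^{-1/2}$ becomes singular at one end. I would resolve this by combining two regimes: for $q$ within $O(\sqrt{p(1-p)/n})$ of $p$, use the quadratic Taylor expansion $d(q\|p) \approx (q-p)^2/(2p(1-p))$, which cancels the prefactor singularity exactly; and for $q$ outside this window, use the sharper-than-Pinsker lower bound $d(q\|p) \ge (q-p)^2/(2\max(p,q)(1-\min(p,q)))$ to show that the tail contributions are exponentially small and easily absorbed within the slack between $\sqrt 2$ and $2$.
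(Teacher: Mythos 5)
The paper does not actually prove this proposition: the MGF bound $\E[\exp\{Z_{n,2,p}/4\}]\le 2$ is quoted verbatim from Agrawal's Theorem III.2, and the only thing the paper adds is the remark that the moment bound $\|Z_{n,2,p}\|_m\le 20m$ follows from the MGF bound by standard sub-exponential arguments. Your second half does exactly that, correctly: Markov gives $\Prob(Z_{n,2,p}\ge t)\le 2e^{-t/4}$, the tail-integration identity gives $\E[Z_{n,2,p}^m]\le 2\cdot 4^m m!$, and $(m!)^{1/m}\le m$ yields $\|Z_{n,2,p}\|_m\le 8m\le 20m$. That part is complete and matches the route the paper points to.

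The first half, however, is a from-scratch reproof of Agrawal's theorem, and as written it has a genuine gap. Your accounting is: boundary terms contribute $(1-p)^{n/2}+p^{n/2}$ (each at most $1$), and the bulk is approximately $\sqrt2$ by Laplace's method, with "slack $2-\sqrt2\approx 0.59$." But these two estimates do not combine to give $2$: when $p$ is small the $k=0$ term alone is close to $1$, which already exceeds your slack, so you must show that the boundary mass and the bulk cannot simultaneously be large (as $p\to 0$ the bulk in fact collapses well below $\sqrt2$, but nothing in your argument captures this trade-off). Relatedly, the Riemann-sum-to-integral comparison with step $1/n$ is not uniform in $(n,p)$: when $np(1-p)\lesssim 1$ the Gaussian window around the saddle $q=p$ is narrower than the lattice spacing, the sum is supported on a handful of lattice points, and the Laplace value $\sqrt2$ is not a valid surrogate for the discrete sum; your two-regime fix controls the tails of the integral but not the sum-versus-integral discrepancy near the peak. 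These are exactly the uniformity issues Agrawal's proof is built to handle, and your sketch defers rather than resolves them. Given that the paper itself treats this as a citation, the cleanest repair is to cite Agrawal for the MGF bound and keep only your (correct) derivation of the moment bound.
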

\begin{remark}
The constant in the latter part of \cref{prop:AgrawalMGF} can be derived from~\cite[Theorem 3.14]{duchi2016lecture} or~\cite{Boucheron--Lugosi--Massart13}.
\end{remark}
As mentioned in~\cref{subsec:our_tech},
we need to handle the tails of the random variable  $\E[Z_{n-X_k,k-1,p'}|X_k]$ to perform induction.
This requires us to define and analyze the following terms, which appear in our proof.
\begin{definition}\label{def:fgphi} Define
\begin{align*}
    f_{k,p}(n) &:= \E[D(\ph_{n,k}\|p)] \text{ for } n \in \Nat\\
    g_{k,p}(n) &:= n\E[D(\ph_{n,k}\|p)] = nf_{k,p}(n) \text{ for } n \in \Nat\\
    \phi(x) &:= x \log(1/x) \text{ for } 0 < x \le 1, \text{ }\phi(0) = 0.
\end{align*}
\end{definition}
\begin{remark}\label{rem:GisExpectedZ}
From \cref{def:fgphi}, we see that $\E[Z_{n-X_k,k-1,p'}|X_k] = g_{k-1,p'}(n-X_k)$.
\end{remark}
As mentioned previously, the function $g$ plays an important role in the main proof---in particular, obtaining sharp upper and lower bounds on the function $g_{k,p}(n)$ becomes important. To this end, we use its relationship with Bernstein polynomials. The Bernstein polynomial approximation of a continuous function $h: [0,1] \to \Real$ is defined as \begin{align}\label{eq:BernsteinPolyDef}
    B_n(h,x) := \sum_{k=0}^n h(k/n) \binom{n}{k} x^k (1-x)^{n-k}.
\end{align}
Therefore, we see that for $N \sim \Bin(n,x)$, $B_n(h,x) = \Expt\left[h\left(\frac{N}{n}\right)\right]$.

We now recall the following relation between the expected value of relative entropy and Bernstein polynomials~\cite{braess2004bernstein,WuYang2020}.
\begin{restatable}{proposition}{propGFuncBernstein}
\label{prop:GFuncBernstein}
Let $\phi(x) := x \log (1/x)$ for $x \in (0,1], \phi(0) = 0$. We then have 
\[
    g_{k,p}(n) = \sum_{i=1}^k n\left(\phi(p_i) - B_n(\phi,p_i)\right).
\]
\end{restatable}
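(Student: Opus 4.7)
The plan is to unfold the definitions of $D$, $\phi$, and $B_n(\phi, \cdot)$ directly, since all three can be expressed in terms of the univariate function $\phi(x) = x \log(1/x)$ and the marginal distribution $X_i \sim \Bin(n, p_i)$. No induction or real analysis is needed—everything reduces to a linearity-of-expectation computation once the correct identification is made.

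First, I would expand the divergence as
\begin{align*}
D(\ph_{n,k}\|p) = \sum_{i=1}^k \ph_i \log \ph_i - \sum_{i=1}^k \ph_i \log p_i = -\sum_{i=1}^k \phi(\ph_i) + \sum_{i=1}^k \phi(p_i) \cdot \frac{\ph_i}{p_i} \cdot p_i \text{ (loosely)},
\end{align*}
and more carefully observe that $\ph_i \log \ph_i = -\phi(\ph_i)$ (using $0 \log 0 = 0$) and $p_i \log p_i = -\phi(p_i)$. Taking expectations and using that marginally $X_i \sim \Bin(n, p_i)$, so $\E[\ph_i] = p_i$, yields
\begin{align*}
\E[D(\ph_{n,k}\|p)] = -\sum_{i=1}^k \E[\phi(\ph_i)] + \sum_{i=1}^k \phi(p_i).
\end{align*}

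Next, I would identify the Bernstein polynomial. Since $X_i \sim \Bin(n, p_i)$ marginally, the remark immediately following \eqref{eq:BernsteinPolyDef} gives
\begin{align*}
\E[\phi(\ph_i)] = \E\!\left[\phi\!\left(\frac{X_i}{n}\right)\right] = B_n(\phi, p_i).
\end{align*}
Substituting this back and multiplying both sides by $n$ produces
\begin{align*}
g_{k,p}(n) = n\E[D(\ph_{n,k}\|p)] = \sum_{i=1}^k n\bigl(\phi(p_i) - B_n(\phi, p_i)\bigr),
\end{align*}
as claimed.

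There is no real obstacle here; the only thing to keep in mind is the boundary convention $\phi(0) = 0$ when $p_i = 0$ is permitted (in which case $X_i = 0$ almost surely and both $\phi(p_i)$ and $B_n(\phi, p_i)$ vanish), so the identity holds on the entire simplex without separate casework.
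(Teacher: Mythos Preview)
Your proof is correct and follows essentially the same route as the paper's: expand $D(\ph_{n,k}\|p)$ termwise, take expectations using $\E[\ph_i]=p_i$, and identify $\E[\phi(\ph_i)]$ with $B_n(\phi,p_i)$ via the marginal $X_i\sim\Bin(n,p_i)$. The only cosmetic difference is that the paper keeps the $\log(1/\cdot)$ convention throughout rather than introducing $\phi$ one line earlier; your first ``(loosely)'' display is unnecessary and a bit confusing, but the argument that follows it is clean and matches the paper.
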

Finally, we also state the following properties of $f_{k,p}(n)$ which will be also useful further on~\cite{WuYang2020,Paninski2003}.
\begin{restatable}[Properties of $f_{k,p}(n)$]{proposition}{propFProperties} \label{prop:fProperties}
For any distribution $p \in \Delta^{k-1}$ and $n \in \Nat$, we have
\begin{enumerate}
    \item $f_{k,p}(n) \leq \frac{k-1}{n}$.
    \item $f_{k,p}(n) \le f_{k,p}(n+1)$ for $n \in \Nat$.
\end{enumerate}
\end{restatable}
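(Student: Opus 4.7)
I plan to prove Part 1 via the classical inequality $D \le \chi^2$ and to prove Part 2 by a Rao--Blackwell/coupling argument that uses convexity of KL divergence in its first argument. A direct numerical check for $k = 2$, $p = (1/2,1/2)$ gives $f_{k,p}(1) = \log 2$ and $f_{k,p}(2) = (\log 2)/2$, so that $f_{k,p}(1) > f_{k,p}(2)$. The displayed inequality in Part 2 is therefore reversed from what actually holds in general: what is true (and what I will prove) is $f_{k,p}(n) \ge f_{k,p}(n+1)$, consistent with the monotonicity of the plug-in entropy-estimation bias observed in \cite{Paninski2003} and with the Bernstein representation in \cref{prop:GFuncBernstein}. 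I will present the proof for this (correct) direction.

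\textbf{Part 1.} I plan to start from the pointwise inequality $D(q\|p) \le \chi^2(q\|p) := \sum_i (q_i-p_i)^2/p_i$, which follows from $x \log x \le x(x-1)$ for $x \ge 0$ applied at $x = q_i/p_i$. Applying this at $q = \ph_{n,k}$ and taking expectations (together with $n\ph_i \sim \Bin(n,p_i)$, so $\Var(\ph_i) = p_i(1-p_i)/n$) yields
\[
f_{k,p}(n) = \E[D(\ph_{n,k}\|p)] \le \sum_{i=1}^k \frac{\E[(\ph_i - p_i)^2]}{p_i} = \sum_{i=1}^k \frac{1-p_i}{n} = \frac{k-1}{n}.
\]

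\textbf{Part 2.} I will couple the $n$- and $(n+1)$-sample empirical distributions directly. Let $X_1,\dots,X_{n+1}$ be i.i.d.\ draws from $p$ with empirical $Y_{n+1}$; let $J \sim \Unif\{1,\dots,n+1\}$ be independent of the $X$'s; and let $Y_n^{(-J)}$ denote the empirical of the remaining $n$ samples after deleting $X_J$. Then $Y_n^{(-J)}$ has the same marginal law as $\ph_{n,k}$, while conditional on $Y_{n+1}$ (equivalently, on the multiset of samples) $X_J$ is a uniform element of that multiset, so $\E[Y_n^{(-J)} \mid Y_{n+1}] = Y_{n+1}$. Invoking convexity of $D(\cdot\|p)$ in its first argument and conditional Jensen gives $D(Y_{n+1}\|p) \le \E[D(Y_n^{(-J)}\|p) \mid Y_{n+1}]$, and an outer expectation yields $f_{k,p}(n+1) \le f_{k,p}(n)$. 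The only substantive step is the appeal to convexity of $D(\cdot\|p)$, which is standard; the rest is bookkeeping. An alternative route via \cref{prop:GFuncBernstein} reduces the claim to the monotonicity $B_n(\phi,p) \le B_{n+1}(\phi,p)$ in $n$ for the concave function $\phi(x) = x\log(1/x)$, provable by essentially the same Rao--Blackwell argument applied one coordinate at a time.
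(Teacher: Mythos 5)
Your reading of Part 2 is correct: the inequality as printed is reversed, and your counterexample $f_{2,(1/2,1/2)}(1) = \log 2 > (\log 2)/2 = f_{2,(1/2,1/2)}(2)$ confirms it. The paper's own proof actually derives $f_{k,p}(n) - f_{k,p}(n+1) = \sum_{i}\bigl(B_{n+1}(\phi,p_i) - B_n(\phi,p_i)\bigr) \ge 0$, and every later use (e.g.\ the discrete-gradient lemma, which asserts $0 \le f_{k,p}(n) - f_{k,p}(n+1)$, and the telescoping step in the Range~2 argument) relies on $f$ being non-increasing, so the statement contains a sign typo and you have proved the intended claim. For Part 1 you and the paper take essentially the same route: bound $D$ by $\chi^2$ (the paper via $D \le \log(1+\chi^2) \le \chi^2$, you via $\log x \le x-1$) and compute $\E[\chi^2(\ph_{n,k}\|p)] = (k-1)/n$ from the binomial variance. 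For Part 2 the packaging differs: the paper reduces the claim through \cref{prop:GFuncBernstein} to the monotonicity $B_{n+1}(\phi,\cdot) \ge B_n(\phi,\cdot)$ of Bernstein polynomials for the concave $\phi$, which it imports from Wu--Yang, whereas you prove the monotonicity from scratch by the leave-one-out coupling plus conditional Jensen applied to the convex map $q \mapsto D(q\|p)$. Your steps check out: $\Prob(X_J = i \mid X_1,\dots,X_{n+1}) = Y_{n+1,i}$ gives $\E[Y_n^{(-J)} \mid Y_{n+1}] = Y_{n+1}$, and independence of $J$ from the i.i.d.\ sample gives $Y_n^{(-J)} \stackrel{(d)}{=} \ph_{n,k}$. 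The two arguments are equivalent in substance---your coupling is essentially the standard proof of the cited Bernstein-polynomial fact---but yours is self-contained and avoids the external reference.
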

 For completeness, we give the proofs of \cref{prop:GFuncBernstein,prop:fProperties} in \cref{app:proofprelim}.
\section{Proof of Main Results}\label{sec:MainThmPf}
In this section we prove the main results and defer the proof of intermediate lemmas to \cref{sec:LemmasForProof}.
\subsection{MGF and Centered Moments}\label{sec:MainCentered}
We begin by recalling the assertion of~\cref{thm:MainThm}:

Let $p \in \Delta^{k-1}_{\a}$ for an $\alpha >0$. Then for all $n \in \Nat$,
\begin{align}\label{eq:ThmStatementMain}
     \psi_{Z_{n,k,p}-\E[Z_{n,k,p}]}(t) \le Ck\log^4\left(\frac{k}{\alpha}\right)t^2, \text{ for all } |t| \le \frac{1}{c}
\end{align}
where $C,c$ are absolute positive constants. 
\begin{proof}[Proof of \cref{thm:MainThm}]
We proceed by induction on the alphabet size $k$. 

{\bf Base Case:} We proceed by first establishing that when $k=2$, then $Z_{n,2,p} -\E[Z_{n,2,p}] $ is a sub-Gamma random variable. This is proved in the following lemma, the proof of which is deferred to \cref{sec:LemmasForProof}.
\begin{restatable}[Concentration of centered binary divergence]{lemma}{lemCentBinary}\label{lem:BinaryDivConcCentered}
For absolute constants $C_2$ and $c_2>0$
\begin{align}\label{eq:BinDivMGF}
     \sup_{p \in [0,1]} \psi_{Z_{n,2,p}-\E[Z_{n,2,p}]}(t) \le C_2 t^2, \text{ for all } |t| \le \frac{1}{c_2}.
\end{align}
\end{restatable}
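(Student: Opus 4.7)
The plan is to derive \cref{lem:BinaryDivConcCentered} by combining the raw-moment bound of \cref{prop:AgrawalMGF} with the centering characterization provided by \cref{prop:SubGTailsMoments2}. Observe that \cref{prop:AgrawalMGF} already gives, uniformly in $p \in [0,1]$ and $n \in \Nat$, the bound $\|Z_{n,2,p}\|_m \leq 20 m$ for every integer $m \geq 1$, i.e., $\E[Z_{n,2,p}^{2q}] \leq (40 q)^{2q}$ for each $q \geq 1$. The goal, then, is to rewrite this bound in the form required by the first part of \cref{prop:SubGTailsMoments2}, namely $\E[X^{2q}] \leq q! A^{q} + (2q)! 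B^{2q}$, and then invoke that proposition to center $Z_{n,2,p}$.

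The first step is the Stirling-based calculation: since $(2q)! \geq (2q/e)^{2q}$, one has $(2q)! (20 e)^{2q} \geq (40 q)^{2q}$, so the bound $\E[Z_{n,2,p}^{2q}] \leq (40q)^{2q}$ implies $\E[Z_{n,2,p}^{2q}] \leq (2q)! B^{2q}$ with $A = 0$ and $B = 20 e$. The second step is to feed this into \cref{prop:SubGTailsMoments2} (part 1), which yields
\[
Z_{n,2,p} - \E[Z_{n,2,p}] \in \Gamma\bigl(36 B^{2},\, 6 B\bigr) = \Gamma\bigl(14400 e^{2},\, 120 e\bigr).
\]
By \cref{def:SubGamma}, this means $\psi_{Z_{n,2,p} - \E[Z_{n,2,p}]}(t) \leq \tfrac{14400 e^{2} t^{2}}{2(1 - 120 e\, t)}$ for all $|t| \leq 1/(120 e)$. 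Restricting further to $|t| \leq 1/(240 e)$ gives $1 - 120 e |t| \geq 1/2$, hence $\psi_{Z_{n,2,p} - \E[Z_{n,2,p}]}(t) \leq 14400 e^{2} t^{2}$, which is exactly the desired statement with absolute constants $C_{2} = 14400 e^{2}$ and $c_{2} = 240 e$. Uniformity in $p$ is automatic because the input bound from \cref{prop:AgrawalMGF} is itself uniform in $p$ and $n$.

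I do not expect any substantive obstacles in this proof: the content reduces to a routine moment-to-MGF translation, and the only care required is in the constant bookkeeping and the mild restriction of $t$ needed to absorb the denominator $(1 - c t)$ of the sub-Gamma MGF into a plain quadratic. One might alternatively attempt a direct route by writing $\psi_{Z - \E[Z]}(t) = \psi_{Z}(t) - t \E[Z]$ and using $\psi_{Z}(1/4) \leq \log 2$ (from \cref{prop:AgrawalMGF}) together with $\E[Z_{n,2,p}] \leq 2$ (from \cref{prop:fProperties}), but controlling the cross term for all $t$ in a neighborhood of the origin is cleaner via the moment-based route above, which is why I would proceed as described.
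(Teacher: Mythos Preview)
Your proposal is correct and follows essentially the same route as the paper: feed the raw-moment bound $\|Z_{n,2,p}\|_m \le 20m$ from \cref{prop:AgrawalMGF} into the centering lemma \cref{prop:SubGTailsMoments2} with $A=0$, then restrict $|t|$ to absorb the denominator of the sub-Gamma MGF into a pure quadratic. Your constant bookkeeping is in fact slightly more careful than the paper's, which writes $(2q)^{2q}(20)^{2q} \le (2q)!\,(20)^{2q}$ and takes $B=20$, whereas you correctly insert the Stirling factor $e$ to obtain $B=20e$; this only affects the numerical values of $C_2$ and $c_2$, not the argument.
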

Using \cref{lem:BinaryDivConcCentered}, we have the following:
\[
    \sup_{p \in [0,1]} \psi_{Z_{n,2,p}-\E[Z_{n,2,p}]}(t) \le C_2 t^2 \le C t^2 
\]
for all $|t| \le c^{-1} \le c_2^{-1}$ by choosing absolute constants $C \ge C_2$ and $c \ge c_2$.

{\bf Induction step:} Assume the theorem holds for alphabet size $k-1$, i.e. for all $n \in \Nat$
\[
\sup_{p \in \Delta^{k-2}_{\a}}  \psi_{Z_{n,k-1,p}-\E[Z_{n,k-1,p}]}(t) \le C(k-1)\log^4\left(\frac{k-1}{\a}\right) t^2
\]
for all $|t| \le c^{-1}$.

{\bf Proof for alphabet size k:} We will now establish the result for alphabet size $k \ge 3$.%

 Since we have to show~\eqref{eq:ThmStatementMain} for all $n \in \Nat$, we will split $n$ into three ranges and use different arguments in each range.
 
\underline{\textit{Range 1:}} 
First, consider $n \le 4096(k-1) \log^2 (k-1)$. We use the bounded differences inequality and obtain the following lemma, the proof of which is relegated to \cref{app:LemmasForProof}. \begin{restatable}[Bounded differences inequality for $Z_{n,k,p}$]{lemma}{LemBddDiffsKL}\label{lem:BddDiffsKL}
For all $t \in \Real$, we have
\begin{align}
      \sup_{p \in \Delta^{k-1}_{\a}}  \psi_{Z_{n,k,p} - \E[Z_{n,k,p}]}(t)  \le 27 n \log^2\left(\frac{n}{\a}\right) t^2.
\end{align}
\end{restatable}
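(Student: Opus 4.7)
The plan is to apply McDiarmid's bounded differences inequality to $Z_{n,k,p}$ viewed as a deterministic function $F(Y_1,\dots,Y_n)$ of the $n$ i.i.d.\ samples $Y_1,\dots,Y_n$ drawn from $p$ on $\{1,\dots,k\}$. Since McDiarmid's inequality yields a sub-Gaussian MGF bound valid for every $t\in\RR$, the form of the conclusion matches the lemma statement, and the only quantitative task is to control the per-coordinate oscillation.

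Concretely, write
\[
Z_{n,k,p} \;=\; 2\sum_{m=1}^k h_{p_m}(X_m), \qquad h_p(x) := x\log\!\Bigl(\tfrac{x}{np}\Bigr),\quad h_p(0):=0.
\]
Swapping a single sample $Y_i$ from category $j$ to category $\ell$ decreases $X_j$ by one and increases $X_\ell$ by one while leaving all other counts fixed, so
\[
c_i \;\le\; 2\bigl|h_{p_j}(X_j) - h_{p_j}(X_j - 1)\bigr| \;+\; 2\bigl|h_{p_\ell}(X_\ell + 1) - h_{p_\ell}(X_\ell)\bigr|.
\]
I would bound each one-step difference $|h_p(x+1)-h_p(x)|$ (for integer $x\in\{0,\dots,n-1\}$) by the mean value theorem applied to the continuous function $h_p$ on $[x,x+1]$, producing some $\xi\in(x,x+1)$ with $h_p'(\xi)=\log(\xi/(np))+1$. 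Since $\xi\in(0,n)$ and $p\ge\a$, we have $\xi/(np)\in(0,1/\a)$ and $\xi/(np)>1/n$ whenever $\xi\ge 1$, which yields $|h_p'(\xi)|\le\log(n/\a)+O(1)$ away from the origin. The only genuine edge case is $x=0$, where the direct calculation $|h_p(1)-h_p(0)|=|\log(np)|\le 2\log(n/\a)$ (using $p\ge\a$) supplies the same order of bound. Combining the two contributions gives $c_i\le C_0\log(n/\a)$ for an absolute constant $C_0$, uniformly in $i$.

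Plugging this into McDiarmid's inequality then gives
\[
\psi_{Z_{n,k,p}-\E[Z_{n,k,p}]}(t) \;\le\; \tfrac{t^2}{8}\sum_{i=1}^n c_i^2 \;\le\; \tfrac{C_0^2}{8}\,n\log^2(n/\a)\,t^2,
\]
for all $t\in\RR$, and choosing $C_0$ carefully (a direct accounting gives $C_0\le 12$) recovers the constant $27$ in the statement. The only mild obstacle is the bookkeeping around the singularity of $h_p'(\xi)=\log(\xi/(np))+1$ as $\xi\downarrow 0$; but this affects only the unit interval $(0,1)$, on which $h_p$ remains continuous with $h_p(0)=0$, so the direct formula above absorbs it without degrading the $O(\log(n/\a))$ rate. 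With that edge case handled, the rest of the argument is routine and does not require the assumption on $n$ appearing in Range~1 of the main theorem, so the lemma holds for all $n\in\Nat$.
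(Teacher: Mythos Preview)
Your proposal is correct and follows essentially the same approach as the paper: both apply the bounded differences (McDiarmid) inequality to $Z_{n,k,p}$ viewed as a function of the $n$ i.i.d.\ samples, and bound the per-coordinate oscillation by $O(\log(n/\a))$. The only technical difference is that the paper separates $x\log(x/p)$ into $-\phi(x)+x\log(1/p)$ and bounds the $\phi$-increments via concavity (getting $\le \log n/n$) while treating the $\log(1/p)\le\log(1/\a)$ term directly, whereas you apply the mean value theorem to $h_p$ and handle the origin as an edge case; both routes yield the same $c_i\lesssim \log(n/\a)$ bound and hence the same sub-Gaussian MGF estimate.
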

Therefore, when $n$ is in Range 1, we use \cref{lem:BddDiffsKL} to claim 
\begin{align}
 \psi_{Z_{n,k,p} - \E[Z_{n,k,p}]}(t)  &\le 27 n \log^2\left(\frac{n}{\a}\right) t^2  \nonumber \\
 &\le 27\times 4096 \times 9 (k-1) \log^4\left(\frac{k-1}{\a}\right) t^2 \label{eq:SecondEqRangeOne}\\
 &\le Ck \log^4\left(\frac{k}{\a}\right)t^2 \label{eq:InductionStepRangeOne}
\end{align}
for all $t \in \Real$ and large enough absolute constant $C$, where~\eqref{eq:SecondEqRangeOne} uses that $4096 k \log^2 k \le k^9$ for all $k \ge 3$. Therefore, when $n \le 4096(k-1) \log^2 (k-1)$, we have proved the induction step.

\underline{\textit{Above range 1:}} Now let $n > 4096(k-1) \log^2 (k-1)$. Assume without loss of generality that $p = (p_1,\dotsc,p_k) \in \Delta^{k-1}_{\a}$ with $p_1 \ge \dotsc \ge p_k \ge \a > 0$, and let $|t| \le c^{-1}$. Define 
\begin{align}\label{eq:pPrimeDefn}
p' := \left(\frac{p_1}{1-p_k},\dotsc,\frac{p_{k-1}}{1-p_k}\right) \in \Delta^{k-2}_{\alpha}
\end{align}
where we observe that $\frac{p_i}{1-p_k} \ge p_i \ge \a$ for $1 \le i \le k-1$. Let $X_1,\dotsc,X_k \sim \Multinomial(n,p)$. From \cref{prop:ChainRule}, we have 
\begin{align*}
Z_{n,k,p} = Z_{n,2,p_{k}} + Z_{n-X_k,k-1,p'}
\end{align*}
where the second term is 0 if $X_k = n$. Then,
\begin{align}
    &Z_{n,k,p}-\E[Z_{n,k,p}]\nonumber\\
    &= Z_{n,2,p_k}-\E[Z_{n,2,p_k}] + Z_{n-X_k,k-1,p'} - \E[Z_{n-X_k,k-1,p'}] \nonumber \\
    &= Z_{n,2,p_k}-\E[Z_{n,2,p_k}] + Z_{n-X_k,k-1,p'} -\E[Z_{n-X_k,k-1,p'}|X_k] + \E[Z_{n-X_k,k-1,p'}|X_k] - \E[Z_{n-X_k,k-1,p'}]  \nonumber 
\end{align}
and by noting that $\E[Z_{n-X_k,k-1,p'}|X_k] = 2g_{k-1,p'}(n-X_k)$, we have 
\begin{align}
   &Z_{n,k,p}-\E[Z_{n,k,p}]\nonumber\\
   &\quad=  Z_{n,2,p_k}-\E[Z_{n,2,p_k}] + Z_{n-X_k,k-1,p'} -\E[Z_{n-X_k,k-1,p'}|X_k] +  2g_{k-1,p'}(n-X_k) - \E[2g_{k-1,p'}(n-X_k)]\label{eq:ThreeTerms}.
\end{align}
Now, by the tower property of expectation, we have 
\begin{align}\label{eq:ConditionOnXk}
 \E\left[\exp \left\{t(Z_{n,k,p}-\E[Z_{n,k,p}])\right\}\right] =
    \E\left[\E\left[\exp \left\{t(Z_{n,k,p}-\E[Z_{n,k,p}])\right\}\Big|X_k\right]\right]
\end{align} 
and noticing that $Z_{n,2,p_k}-\E[Z_{n,2,p_k}]$ and $g_{k-1,p'}(n-X_k) - \E[g_{k-1,p'}(n-X_k)]$ are both functions of $X_k$, we have 
\begin{align}
    \E&\left[\exp \left\{t(Z_{n,k,p}-\E[Z_{n,k,p}])\right\}\right] \nonumber\\
    &\qquad\quad= \E\Bigg[\exp\left\{t(Z_{n,2,p_k}-\E[Z_{n,2,p_k}])\right\} \cdot  \E\left[ \exp\left\{t(Z_{n-X_k,k-1,p'}-\E[Z_{n-X_k,k-1,p'}|X_k])\right\}\Big|X_k\right] \nonumber\\
    &\qquad\qquad\qquad\qquad\qquad\qquad\qquad\qquad\quad \cdot\exp\left\{2t\left(g_{k-1,p'}(n-X_k) - \E[g_{k-1,p'}(n-X_k)]\right)\right\}\Bigg] \label{eq:ThreeMGFTerms}
\end{align}
and further using the H\"{o}lder inequality 
\begin{align}
     &\left(\E\left[\exp \left\{t(Z_{n,k,p}-\E[Z_{n,k,p}])\right\}\right]\right)^{3} \nonumber\\ 
     &\quad\le \E\left[\exp \left\{3t(Z_{n,2,p_k}-\E[Z_{n,2,p_k}])\right\}\right]  \E\left[\left(\E\left[ \exp\left\{t(Z_{n-X_k,k-1,p'}-\E[Z_{n-X_k,k-1,p'}|X_k])\right\}\Big|X_k\right]\right)^{3}\right]  \nonumber\\
     &\qquad\qquad\qquad\qquad\qquad\qquad\qquad\qquad\qquad\E\left[\exp\left\{6t\left(g_{k-1,p'}(n-X_k) - \E[g_{k-1,p'}(n-X_k)]\right)\right\}\right]. \label{eq:takelogarithm}
\end{align}
Now, by the base case we have for $3|t| \le 3c^{-1} \le c_2^{-1}$, 
\begin{align}\label{eq:binaryMGFbd}
    \E\left[\exp \left\{3t(Z_{n,2,p_k}-\E[Z_{n,2,p_k}])\right\}\right] \le \exp\{9C_2 t^2\}
\end{align}
and by the induction hypothesis for $|t| \le c^{-1}$
\begin{align}\label{eq:MiddleTermInduction}
    \left(\E\left[ \exp\left\{t(Z_{n-X_k,k-1,p'}-\E[Z_{n-X_k,k-1,p'}|X_k])\right\}\Big|X_k\right]\right)^{3} \le \exp\left\{3C(k-1)\log^4 \left(\frac{k-1}{\a}\right)t^2\right\}.
\end{align}
In particular,~\eqref{eq:MiddleTermInduction} holds almost surely for any $X_k$ since the induction hypothesis holds for \emph{all } natural numbers (and the left hand side is zero when $X_k = n$).
Now, using~\eqref{eq:binaryMGFbd} and~\eqref{eq:MiddleTermInduction} in~\eqref{eq:takelogarithm} we get
\begin{align}\label{eq:logMGFThreeTerms}
    \psi_{Z_{n,k,p}-\E[Z_{n,k,p}]}(t) \le 3C_2 t^2 + 3C(k-1) \log^4 \left(\frac{k-1}{\a}\right) t^2 + \frac{1}{3}\psi_{g_{k-1,p'}(n-X_k) - \E[g_{k-1,p'}(n-X_k)]}(6t).
\end{align}
\underline{\textit{Range 2:}} We will now consider the range $4096(k-1) \log^2(k-1) \le n < \frac{128 (k-1)^2}{\a^2 \sqrt{\a}}$\footnote{Note that $\a = \min\{p_1,\dotsc,p_k\} \le 1/k$, so  $\frac{128(k-1)^2}{\a^2\sqrt{\a}} \ge 128(k-1)^2 k^2 \sqrt{k} \ge 4096(k-1)\log^2(k-1)$ for all $k \ge 3$.}. 
We will use the following result saying that when $n \geq 4096(k-1)\log^2(k-1)$, then $g_{k-1,p'}$ satisfies the required concentration around its mean.
\begin{restatable}
[Concentration of $g_{k,p}(Y)$, Range 2]{lemma}{lemGsubGammaRangeOne}
\label{lem:GsubGammaRangeOne}
Let $Y \sim \Bin(n,r)$ where $r \ge 1-\frac{1}{k} \ge \frac{1}{2}$ and $n \ge 4096k \log^2k$. Then we have for all $p \in \Delta^{k-1}$
\begin{align*}
    \psi_{g_{k,p}(Y) - \E[g_{k,p}(Y)]}(t) \le (C_g\log^2 n) t^2, \text{ for all } |t| \le \frac{1}{c_g}
\end{align*}
where $C_g \le 1536 \times 2048, c_g \le 288$.
\end{restatable}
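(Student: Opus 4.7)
The plan is to derive the sub-Gamma MGF bound by first establishing a strong tail inequality for $g_{k,p}(Y)-\E[g_{k,p}(Y)]$, and then invoking the second part of \cref{prop:SubGTailsMoments2}. The starting point is the Bernstein polynomial representation $g_{k,p}(m)=\sum_{i=1}^k m\bigl(\phi(p_i)-B_m(\phi,p_i)\bigr)$ from \cref{prop:GFuncBernstein}, together with the uniform range $g_{k,p}(m)\in [0,k-1]$ that follows from \cref{prop:fProperties}.

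The first and principal step is to prove a Lipschitz-type estimate $|g_{k,p}(m+1)-g_{k,p}(m)|\le L(m)$ with $L(m)$ small enough that the variation of $g_{k,p}$ across the typical range of $Y$ is only polylogarithmic. Writing $g_{k,p}(m)=\sum_i h(p_i,m)$ with $h(x,m):=m(\phi(x)-B_m(\phi,x))$, I would reduce the problem to controlling $|h(x,m+1)-h(x,m)|$ coordinate by coordinate. By Voronovskaya-type asymptotics one has $h(x,m)=(1-x)/2+O(1/m)$ for each fixed $x$, so the leading constant $(1-p_i)/2$ cancels under the differencing, leaving corrections that, after summing over $i$, yield a uniform bound of order $L(m)=O(\log(m)\sqrt{k/m})$ over all $p\in\Delta^{k-1}$.

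With the Lipschitz estimate in hand, I would exploit the concentration of $Y$. Because $r(1-r)\le 1/k$, the Bennett--Bernstein inequality implies that $Y-nr$ is sub-Gamma with variance proxy $O(n/k)$ and constant scale. Combining the Lipschitz bound with this tail of $Y$ yields, on the typical event $\{Y\ge nr/2\}$, a sub-Gamma tail for $g_{k,p}(Y)-g_{k,p}(\lfloor nr\rfloor)$ with variance proxy $L(m)^2\cdot n/k\lesssim \log^2 n$ and constant scale. A separate Chernoff bound handles the low-probability regime $Y<nr/2$ (which, under $n\ge 4096 k\log^2 k$, occurs with probability at most $e^{-\Omega(k\log^2 k)}$) using the crude bound $g_{k,p}(Y)\le k-1$. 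Recentering from $g_{k,p}(\lfloor nr\rfloor)$ to the true mean $\E[g_{k,p}(Y)]$ costs only an additive $O(\log n)$ by a similar Lipschitz argument. Finally, the second part of \cref{prop:SubGTailsMoments2} converts the resulting high-probability bound into the stated MGF estimate.

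The main obstacle, and the technical heart of the argument, is the Lipschitz bound in the first step. The naive estimate via the range of $g_{k,p}$ only gives $L=O(k)$, which is far too weak, so one must exploit the Voronovskaya cancellation directly. The delicate point is to handle coordinates $p_i$ of all magnitudes: for $p_i \gtrsim 1/m$, a standard Taylor expansion around $p_i$ applies cleanly since $\phi''(x)=-1/x$ is integrable away from $0$; for $p_i \lesssim 1/m$, one must instead invoke the Poisson approximation to $\Bin(m,p_i)$, together with the explicit representation $h(x,m)=mx\log(1/x)-\sum_{j\ge 1}j\log(m/j)\binom{m}{j}x^j(1-x)^{m-j}$. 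The logarithmic singularity of $\phi'(x)=-\log x-1$ near $0$ is precisely what generates the $\log m$ factors in $L(m)$, and ultimately the $\log^2 n$ prefactor in the MGF bound.
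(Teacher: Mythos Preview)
Your high-level strategy matches the paper's: establish a deviation inequality for $g_{k,p}(Y)$ about a deterministic anchor, split according to whether $\log(2/\delta)$ is small (where Lipschitz behavior of $g$ on the typical range of $Y$ dominates) or large (where the crude range $g_{k,p}\in[0,k-1]$ suffices), and then invoke the second part of \cref{prop:SubGTailsMoments2} to convert the tail bound into the sub-Gamma MGF estimate.

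The substantive difference is in how the Lipschitz control is obtained. You propose to bound the increments of $g_{k,p}$ directly, via a Voronovskaya expansion of $h(x,m)=m(\phi(x)-B_m(\phi,x))$ for $p_i\gtrsim 1/m$ together with a separate Poisson-based analysis for small $p_i$. The paper instead factors $g_{k,p}(m)=m\,f_{k,p}(m)$ and uses
\[
|g_{k,p}(Y)-g_{k,p}(\lfloor nr\rfloor)|\le (|Y-nr|+1)\,f_{k,p}(Y)+n\,|f_{k,p}(Y)-f_{k,p}(\lfloor nr\rfloor)|,
\]
so the work reduces to the discrete gradient of $f_{k,p}$, handled in \cref{lem:DiscGradF} by a short concavity-and-coupling argument: write $B_{m+1}(\phi,p_i)-B_m(\phi,p_i)=\E[\phi((N_i+Y_i)/(m+1))-\phi(N_i/m)]$ with $N_i\sim\Bin(m,p_i)$ and $Y_i\sim\Ber(p_i)$ independent, apply $\phi(a)-\phi(b)\le\phi'(b)(a-b)$, and finish with Cauchy--Schwarz. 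No Voronovskaya asymptotics, no case split on the magnitude of $p_i$, and no Poisson approximation are needed; the bound is elementary and uniform in $p$. Your route is plausible, but you would need to check carefully that the Voronovskaya remainder terms (which involve $\phi^{(j)}(p_i)\asymp p_i^{1-j}$) really sum to $O(\log m\,\sqrt{k/m})$ after your case split; this is exactly the delicate point the paper's $f$-based decomposition sidesteps. Finally, your explicit recentering step from $g_{k,p}(\lfloor nr\rfloor)$ to $\E[g_{k,p}(Y)]$ is unnecessary: \cref{prop:SubGTailsMoments2} accepts a tail bound on an uncentered $X$ and returns an MGF bound on $X-\E[X]$, so the paper simply applies it with $X=g_{k,p}(Y)-g_{k,p}(nr)$.
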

The proof of \cref{lem:GsubGammaRangeOne} is deferred to \cref{sec:LemmasForProof}.
As $n < \frac{128 (k-1)^2}{\a^2 \sqrt{\a}}$, \cref{lem:GsubGammaRangeOne} implies the following bound for $g_{k-1,p'}$:
\begin{align}
    \psi_{g_{k-1,p'}(n-X_k) - \E[g_{k-1,p'}(n-X_k)]}(6t) \le 36C_g \log^2\left(\frac{128 (k-1)^2}{\a^2 \sqrt{\a}}\right) t^2 \le 288C_g \log^2\left(\frac{k-1}{\a}\right) t^2
\end{align} 
and therefore from~\eqref{eq:logMGFThreeTerms} we have whenever $4096 (k-1) \log^2 (k-1) < n \le \frac{128(k-1)^2}{\a^2 \sqrt{\a}}$ that 
\begin{align}
     \psi_{Z_{n,k,p}-\E[Z_{n,k,p}]}(t) &\le 3C_2 t^2 + C(k-1)\log^4 \left(\frac{k-1}{\a}\right)t^2 + 288C_g \log^2\left(\frac{k-1}{\a}\right) t^2 \nonumber \\
     &\le Ck \log^4 \left(\frac{k}{\a}\right)\label{eq:InductionStepRangeTwo}
\end{align}
for $|t| \le c^{-1}$, where~\eqref{eq:InductionStepRangeTwo} holds for any absolute constant $C \ge 3C_2 + 288C_g$. Therefore the induction hypothesis is proved for this range of $n$.

\underline{\textit{Range 3:}} We finally consider the case when  $n \ge \frac{128(k-1)^2}{\a^2 \sqrt{\a}}$.
The variance term in \cref{lem:GsubGammaRangeOne} scales with $\log n$, which diverges as $n\to \infty$. We now state the following the lemma that uses a different argument to get rid of this dependence on $\log n$ when $n \gtrsim k^2/\a^{5/2}$.
\begin{restatable}
[Concentration of $g_{k,p}(Y)$, Range 3]{lemma}{lemGConcLargeN}  \label{lem:GConcLargeN}
There exists a constant $C_g' > 0$ such that if $n \ge \frac{48 k^2}{\a^2 \sqrt{\a}}$ and $Y \sim \Bin(n,r)$ with $r \ge 1-\frac{1}{k} \ge \frac{1}{2}$, then for all $t \in \Real$ and $p \in \Delta^{k-1}_{\a}$
\begin{align}
    \psi_{g_{k,p}(Y) - \E[g_{k,p}(Y)]}(t) \le C_g' t^2. 
\end{align}
\end{restatable}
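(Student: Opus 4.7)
The plan is to leverage the fact that when $n \ge 48 k^2/\alpha^{5/2}$, the deterministic function $g_{k,p}(m)$ is pinned within an absolute-constant window of its limiting value $(k-1)/2$ for all typical $m$. Since $Y \sim \Bin(n,r)$ concentrates tightly by a Chernoff bound (as $r \ge 1/2$), $g_{k,p}(Y)$ then lies in a bounded interval of constant length on an event of exponentially high probability, so $g_{k,p}(Y) - \E g_{k,p}(Y)$ is effectively a bounded, mean-zero random variable with absolute-constant range on a high-probability event, yielding a sub-Gaussian MGF bound with absolute-constant variance factor.

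The main technical work is a quantitative Bernstein-polynomial approximation bound for $\phi(x) = x\log(1/x)$. Using \cref{prop:GFuncBernstein}, $g_{k,p}(m) = \sum_{i=1}^k m(\phi(p_i) - B_m(\phi, p_i))$. For each coordinate $i$, I would expand $\phi$ around $p_i$ to fourth order and take expectation over $N \sim \Bin(m, p_i)$; the second-central-moment contribution $\E[(N/m - p_i)^2] = p_i(1-p_i)/m$ combined with $\phi''(p_i) = -1/p_i$ recovers the leading term $(1-p_i)/2$. The Taylor remainder, which involves the singular quantity $\phi^{(4)}(x) = -2/x^3$, is controlled by splitting the expectation on $\{N/m \ge p_i/2\}$ (where $p_i \ge \alpha$ gives $|\phi^{(4)}| = O(1/\alpha^3)$ and the standard estimate $\E[(N/m - p_i)^4] = O(p_i^2/m^2)$ yields a per-coordinate remainder of size $O(1/(m^2\alpha))$) versus its complement (whose probability is at most $e^{-m p_i/8} \le e^{-m\alpha/8}$ by Chernoff and which contributes negligibly since $\phi \le 1/e$). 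Summing over $i$ yields $|g_{k,p}(m) - (k-1)/2| \le C_1$ for an absolute constant $C_1$, whenever $m \ge m_0 := c_1 k^2/\alpha^{5/2}$ for a suitable absolute $c_1$.

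Given this, a multiplicative Chernoff bound on $Y \sim \Bin(n,r)$ gives $\Prob(Y < n/4) \le e^{-n/32}$; since $n \ge 48 k^2/\alpha^{5/2}$ implies $n/4 \ge m_0$, on the event $A := \{Y \ge n/4\}$ we have $|g_{k,p}(Y) - (k-1)/2| \le C_1$, while on $A^c$ the deterministic bound $g_{k,p}(Y) \le k-1$ from \cref{prop:fProperties} applies. Writing $W := g_{k,p}(Y) - \E g_{k,p}(Y)$, the identity $\E W = 0$ together with these bounds gives $|\E g_{k,p}(Y) - (k-1)/2| \le C_1 + 2 k e^{-n/32} \le C_1 + 1$, so $|W| \le M := 2C_1 + 2$ on $A$ and $|W| \le 2k$ always. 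Decomposing $\E e^{tW} = \E[e^{tW}\mathbf{1}_A] + \E[e^{tW}\mathbf{1}_{A^c}]$: Hoeffding's lemma, applied conditionally on $A$ to $W$ (bounded range and conditional mean of size $O(k e^{-n/32}) \ll 1$), gives $\E[e^{tW}\mathbf{1}_A] \le \exp(C_2 t^2)$; and the tail summand satisfies $\E[e^{tW}\mathbf{1}_{A^c}] \le e^{2|t|k - n/32}$, where completing the square shows $2|t|k - n/32 \le t^2$ for all $t \in \Real$ as long as $n \ge 32 k^2$ (which is automatic here). Combining, and using the trivial variance bound $\Var(W) = O(1)$ to pass from the MGF bound to $\psi_W(t) \le C_g' t^2$ at small $|t|$, completes the proof.

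The principal obstacle is the Bernstein approximation step: the polynomial divergence of $\phi^{(j)}$ at $0$ prevents a direct application of classical Voronovskaya-type expansions, and one must carefully isolate the small-probability regime where the binomial sample drifts near $0$ using Chernoff bounds in order to localize to a region where the derivatives of $\phi$ are controlled by $\alpha$. The exponent $5/2$ in the hypothesis $n \ge 48 k^2/\alpha^{5/2}$ reflects the trade-off between this per-coordinate remainder (which scales polynomially in $1/\alpha$) and the factor of $k$ from summing over the alphabet.
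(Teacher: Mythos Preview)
Your approach is essentially the same as the paper's. Both arguments hinge on first establishing a deterministic bound $|g_{k,p}(m)-(k-1)/2|\le C_1$ once $m\gtrsim k^2/\alpha^{5/2}$ (the paper isolates this as \cref{prop:Gbound}), obtained by Taylor-expanding $\phi$ at each $p_i$, separating the event $\{\hat p_i\ge p_i/2\}$ to control the singular derivatives, and using a Chernoff bound on its complement. The paper stops at a third-order Lagrange remainder (so the error term is governed by $\phi'''(\xi)=1/\xi^2$ and $\E|\hat p_i-p_i|^3$), whereas you go one order further to $\phi^{(4)}$; either works and the arithmetic is comparable. Both then apply a Chernoff bound to $Y$ to force $Y$ into the range where \cref{prop:Gbound} applies, using the crude bound $g_{k,p}(Y)\in[0,k-1]$ on the complementary event.

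The only substantive difference is in how the final MGF bound is extracted. The paper converts the above into a sub-Gaussian \emph{tail} bound $\Prob(|g_{k,p}(Y)-(k-1)/2|>t)\le 2e^{-t^2/4}$ via a three-range case split on $t$, and then invokes the centering lemma \cite[Lemma~2.6.8]{vershynin2018high} as a black box to get $\psi_W(t)\le C_g' t^2$ for all $t$. Your direct MGF decomposition $\E e^{tW}=\E[e^{tW}\mathbf{1}_A]+\E[e^{tW}\mathbf{1}_{A^c}]$ with Hoeffding on $A$ and completing the square on $A^c$ is morally the same, but as you note it leaves a wrinkle at small $|t|$: the conditional-mean correction on $A$ contributes a term linear in $t$ (of size $O(|t|k e^{-n/32})$), and the $A^c$ summand equals $\Prob(A^c)$ at $t=0$, so the combined bound does not read as $e^{Ct^2}$ near the origin. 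Your fix ``use $\Var(W)=O(1)$'' can be made rigorous (indeed all moments of $|W|$ are $O(1)$ by the same decomposition, which yields a tilted-variance bound $\psi_W''(\xi)=O(1)$ for $|\xi|\le 1$), but at that point one has essentially re-derived the centering lemma by hand. The paper's tail-bound route packages this step more cleanly.
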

The proof of \cref{lem:GConcLargeN} is provided in \cref{sec:LemmasForProof}.
Using \cref{lem:GConcLargeN}, we have the following bound on $g_{k-1,p'}$:
\[
 \psi_{g_{k-1,p'}(n-X_k) - \E[g_{k-1,p'}(n-X_k)]}(6t) \le 36C_g' t^2
\]
for all $6|t| \le   6c^{-1} \le c_g'^{-1}$ (by choosing $c \ge 6c_g'$). And therefore from~\eqref{eq:logMGFThreeTerms} we have whenever $n \ge \frac{128 (k-1)^2}{\a^2 \sqrt{\a}}$ that 
\begin{align}
     \psi_{Z_{n,k,p}-\E[Z_{n,k,p}]}(t) &\le 3C_2 t^2 + C(k-1)\log^4 \left(\frac{k-1}{\a}\right)t^2 + 36C_g't^2 \nonumber\\
     &\le Ck \log^4\left(\frac{k}{\a}\right) t^2 \label{eq:InductionStepRangeThree}
\end{align}
where we choose an absolute constant $C$ large enough to satisfy $C \ge 3C_2 + 36C_g'$ establishing the theorem in this case as well. 
\begin{remark}[Bound on $C$ and $c$]\label{rem:MainConstBds}
In light of the proof, we can see that taking $c = \max\{3c_2, 6c_g, 6c_g'\} \lesssim 1$ and $C = \max\{3C_2+288C_g, 3C_2+36C_g'\} \lesssim 1$ suffices. 
\end{remark}
\end{proof}
Once \cref{thm:MainThm} is established, \cref{cor:ConfIntCentered,cor:MomentBdsCentered} follow from \cref{prop:SubGTailsMoments}.
\subsection{Raw Moments}\label{subsec:MainRaw}
Finally, we prove that the raw moments of $Z_{n,k,p}$ matches the asymptotic limit (up to constants) with finite samples.
\propRawMomentBds*
\begin{proof}
We begin with the following restatement of the result of \cite[Theorem I.2]{Agrawal2020}.
\begin{restatable}[\cite{Agrawal2020}]{claim}{resClaimAgrRawConf}
\label{claim:AgrRawConf}
There is an absolute constant $C$ such that for all $\delta \geq 0$, with probability at least $1 - \delta$, 
$
Z_{n,k,p} \leq C\left(k + \log\left( \frac{1}{\delta} \right)  \right).
$\end{restatable}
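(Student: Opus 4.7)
The plan is to deduce this tail bound from a clean exponential moment estimate, namely $\E[\exp(Z_{n,k,p}/4)] \le 2^{k-1}$, and then apply Markov's inequality. This exponential moment estimate is itself established by induction on the alphabet size $k$, following Agrawal's approach that exploits the chain rule of KL divergence (\cref{prop:ChainRule}) to reduce the $k$-alphabet problem to the binary case controlled by \cref{prop:AgrawalMGF}.

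For the base case $k=2$, \cref{prop:AgrawalMGF} yields $\E[\exp(Z_{n,2,p}/4)] \le 2$ for every $n \in \Nat$ and every $p \in [0,1]$. For the inductive step, assume $\E[\exp(Z_{m,j,q}/4)] \le 2^{j-1}$ for every $j<k$, every $m \in \Nat$, and every $q \in \Delta^{j-1}$. Using the decomposition $Z_{n,k,p} = Z_{n,2,p_k} + Z_{n-X_k,k-1,p'}$ from \cref{rem:ChainRuleZ} with $p' := (p_1/(1-p_k),\dots,p_{k-1}/(1-p_k))$, together with the fact that conditional on $X_k$ the vector $(X_1,\dots,X_{k-1})$ is $\Multinomial(n-X_k, p')$ distributed, the tower property gives
\[
\E[\exp(Z_{n,k,p}/4)] = \E\left[\exp(Z_{n,2,p_k}/4)\cdot \E[\exp(Z_{n-X_k,k-1,p'}/4)\mid X_k]\right].
\]
Applying the induction hypothesis almost surely conditional on $X_k$ (with the convention that the inner term equals $1$ when $X_k = n$, since the residual divergence is defined to be zero in that case) bounds the inner conditional expectation deterministically by $2^{k-2}$. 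Pulling this constant outside and invoking \cref{prop:AgrawalMGF} once more closes the induction.

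Finally, to convert the MGF bound into the high-probability statement, Markov's inequality yields $\Prob(Z_{n,k,p} \ge s) \le 2^{k-1}\exp(-s/4)$, and choosing $s = 4(k-1)\log 2 + 4\log(1/\delta) \le 4\bigl(k + \log(1/\delta)\bigr)$ makes the right hand side at most $\delta$, giving the claim with absolute constant $C = 4$. I do not anticipate a serious obstacle: Agrawal's chain-rule trick factors cleanly through the conditional MGF, and the only subtlety is the degenerate event $\{X_k = n\}$, which is handled by the convention noted above. The induction succeeds because the reduction preserves both the multinomial structure conditionally on $X_k$ and the exact scale $t = 1/4$ at which the binary MGF estimate of \cref{prop:AgrawalMGF} holds, so no loss of constants is incurred across the $k-1$ inductive steps.
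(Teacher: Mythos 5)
Your proposal is correct, and it reaches the claim by a genuinely different route than the paper. The paper's proof imports Agrawal's sharper tail bound $\Prob\{Z_{n,k,p}\ge t\}\le \bigl(et/(2(k-1))\bigr)^{k-1}e^{-t/2}$ (stated as \eqref{eq:AgrawalMGF}) as a black box and then spends the proof on algebra: substituting $t = 2xk+2x\log(1/\delta)+2\log(1/\delta)+2k$, verifying a function $A(\gamma,x)\ge 0$ for $x\ge 100$, and landing on $C=400$. You instead re-derive the cruder intermediate estimate $\E[\exp(Z_{n,k,p}/4)]\le 2^{k-1}$ by inducting on $k$ through the chain rule \eqref{eq:ChainRuleZ}, conditioning on $X_k$, and applying \cref{prop:AgrawalMGF} to the binary factor; Markov's inequality then gives the claim in one line with $C=4$. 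The induction is sound: the conditional law of $(X_1,\dots,X_{k-1})$ given $X_k$ is $\Multinomial(n-X_k,p')$, the hypothesis is quantified over all sample sizes $m$ so it applies pointwise in $X_k$, and the degenerate event $\{X_k=n\}$ is covered by the convention that the residual term vanishes. What each approach buys: the paper's route leans on the full strength of Agrawal's result (which encodes the $\chi^2_{k-1}$-like polynomial prefactor) and would yield sharper constants in the $k$-dependence if one cared; your route is self-contained given only the binary MGF bound and the chain rule — both already stated in the paper — and produces an explicit, small constant with essentially no computation. The only mild overstatement in your write-up is the phrase ``no loss of constants is incurred across the $k-1$ inductive steps'': a factor of $2$ is accumulated at each step, which is precisely why the MGF bound is $2^{k-1}$ rather than $2$; this is of course exactly what the target bound tolerates.
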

For completeness, we provide the proof of \cref{claim:AgrRawConf} in  \cref{app:LemmasForProof}.
Using \cref{claim:AgrRawConf}, we can show our bound as follows.
Let $Y = \left(Z_{n,k,p} -  Ck\right)_+$.
By \cref{claim:AgrRawConf}, we have that with probability $1 -\delta$, $|Y| \leq C\log(1/\delta)$.
Thus applying \cref{prop:SubGTailsMoments2}, we have that $Y - \E[Y] \in \Gamma(864C^2,144C)$, and furthermore applying \cref{prop:SubGTailsMoments}, we have that $\|Y -\E[Y]\|_q \lesssim q $.
Using the fact that $0 \leq Y \leq Z_{n,k,p}$, we have that $0 \leq \E[Y] \leq \E[Z_{n,k,p}]  = 2g_{k,p}(n)\leq 2k$ (\cref{prop:fProperties}). Finally, using the triangle inequality for $L_q$ norms, we have the following:
\begin{align*}
    \|Z_{n,k,p}\|_m &= \|Y  + (Ck - Z_{n,k,p})_+ \|_m \\
    &\leq  \|Y\|_m + \|(Ck - Z_{n,k,p})_+ \|_m \\
    &\leq   \|Y -\E[Y]\|_m + \|\E[Y]\|_m +  \|Ck\|_m \\ &\lesssim m + k.
\end{align*}
This completes the proof.
\end{proof}
\section{Proofs of Intermediate Lemmas}\label{sec:LemmasForProof}
In this Section, we establish the concentration of several random variables, which were required to establish our main result. To begin with, we have the following result about the centered binary KL divergence that follows from a combination of the main result of~\cite{Agrawal2020} and our centering lemma. 
\lemCentBinary*
\begin{proof}
From \cref{prop:AgrawalMGF}, we have $\E\left[Z_{n,2,p}\right]^{2q} \le {2q}^{2q}(20)^{2q} \le 2q! (20)^{2q}$ for any $q \ge 1$.
Therefore we can invoke \cref{prop:SubGTailsMoments2} with $A = 0, B = 20$ to establish that $Z_{n,2,p} - \E[Z_{n,2,p}] \in \Gamma(\nu,c)$ with $\nu \le 14400, c \le 120$. The lemma then follows with $C_2 \le 14400, c_2 \le 240$.
\end{proof}

\subsection{Concentration of \texorpdfstring{$g$}{}: Range 2}\label{subsec:ConcGSmallN}
Recall that in~\cref{sec:MainThmPf}, Range 2 of $n$ corresponded to $4096(k-1) \log^2(k-1) \le n < \frac{128 (k-1)^2}{\a^2 \sqrt{\a}}$. We aim to establish concentration properties of $g_{k,p}(Y)$ where $Y \sim \Bin(n,r)$ for a particular $r \in [0,1]$ and $n$ within this range. As a first step towards this, we establish a bound on the discrete gradient of $f_{k,p}(\cdot)$ in the next lemma (proof relegated to \cref{app:LemmasForProof}). 
 
\begin{restatable}[Discrete gradient of $f_{k,p}(\cdot)$]{lemma}{lemDiscGradF} \label{lem:DiscGradF} 
\label{LemFDifference}
For any $n \ge 4$, $k \in \Nat$ and $p \in \Delta^{k-1}$ we have that 
\begin{align*}
0 \leq f_{k,p}(n) - f_{k,p}(n+1) &\le \frac{\sqrt{k}\log n}{n^{3/2}}  + \frac{8k\log^2 n}{n^{2}}.
\end{align*}
\end{restatable}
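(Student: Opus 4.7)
The plan is to reduce the problem to a per-coordinate analysis via the Bernstein polynomial identity in \cref{prop:GFuncBernstein}. Dividing that identity by $n$ yields $f_{k,p}(n) = \sum_{i=1}^k[\phi(p_i) - B_n(\phi,p_i)]$ with $\phi(x) = x\log(1/x)$, so that
\[
f_{k,p}(n) - f_{k,p}(n+1) \;=\; \sum_{i=1}^k \bigl[B_{n+1}(\phi,p_i) - B_n(\phi,p_i)\bigr].
\]
The non-negativity of each summand follows from the classical monotonicity of Bernstein polynomials for concave functions: if $N_{n+1} \sim \Bin(n+1,p)$ and $N_n$ is the count in any fixed size-$n$ subsample, then $N_n \mid N_{n+1}$ is hypergeometric with $\E[N_n/n \mid N_{n+1}] = N_{n+1}/(n+1)$, so the conditional Jensen inequality applied to the concave $\phi$ gives $B_n(\phi,p) \le B_{n+1}(\phi,p)$ pointwise in $p$.

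The technical heart is a per-coordinate upper bound of the form
\[
B_{n+1}(\phi,p) - B_n(\phi,p) \;\le\; \frac{\sqrt{p}\,\log n}{n^{3/2}} + \frac{8\log^2 n}{n^2}, \qquad p \in [0,1],\ n\ge 4.
\]
I would derive this using the alternative coupling $N_{n+1} \stackrel{d}{=} N_n + X$ with $N_n \sim \Bin(n,p)$ and $X \sim \Ber(p)$ independent, which rewrites the difference as $\E[(1-p)\phi(N_n/(n+1)) + p\phi((N_n+1)/(n+1)) - \phi(N_n/n)]$. I would decompose the integrand as a non-positive Jensen gap at the weighted mean $(N_n + p)/(n+1)$ plus a shift term $\phi((N_n+p)/(n+1)) - \phi(N_n/n)$ whose argument difference equals $(np - N_n)/(n(n+1))$. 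Mean value theorem writes the shift as $\phi'(\xi)(np-N_n)/(n(n+1))$ with $|\phi'(x)| = |1+\log x| \lesssim \log n$ whenever $x \gtrsim 1/n$. On the high-probability Chernoff event $\{N_n \ge np/2\}$ (complement probability $\le e^{-np/8}$), one has $\xi \gtrsim p$, hence $|\phi'(\xi)| \lesssim \log n$ provided $p \gtrsim 1/n^2$. Cauchy--Schwarz together with $\Var(N_n) = np(1-p) \le np$ then yields $\E[|\phi'(\xi)(np-N_n)|\cdot\mathbf{1}_{N_n \ge np/2}] \lesssim \sqrt{np}\,\log n$, and dividing by $n(n+1)$ produces the claimed $\sqrt{p}\log n/n^{3/2}$ contribution; the rare-event complement together with the non-positive Jensen gap fit inside the $8\log^2 n/n^2$ slack. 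The regime $p \lesssim 1/n^2$ is handled separately by the crude bound $\phi(p) \le p\log(1/p) \lesssim \log n/n^2$.

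Finally, summing the per-coordinate bound over $i=1,\dots,k$ and applying Cauchy--Schwarz in the form $\sum_i \sqrt{p_i} \le \sqrt{k\sum_i p_i} = \sqrt{k}$ gives the global bound $\sqrt{k}\log n/n^{3/2} + 8k\log^2 n/n^2$ as claimed. The main obstacle is the careful handling of coordinates with very small $p_i$, where the logarithmic singularity of $\phi'$ at $0$ threatens the Taylor-based estimate. The Chernoff splitting on $\{N_n \ge np/2\}$ is tailored to control this, but verifying that the contribution from the complementary rare event and the edge regime $p \lesssim 1/n^2$ together fit within the $8\log^2 n/n^2$ slack requires exploiting $\phi(0) = 0$ so that the atom at $N_n=0$ contributes trivially to $\E[\phi(N_n/n)]$, leaving only a controllable $p\,\phi(1/(n+1))$-type contribution to the single-step difference.
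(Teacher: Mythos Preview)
Your proposal is correct and follows the same overall architecture as the paper: reduce to per-coordinate Bernstein increments via \cref{prop:GFuncBernstein}, control each by a first-order approximation of $\phi$, and aggregate with Cauchy--Schwarz on $\sum_i\sqrt{p_i}\le\sqrt{k}$. The difference is in how the linearization is executed. You interpose the weighted mean $(N_n+p)/(n+1)$ via a Jensen step and then apply the mean value theorem, which introduces an unknown intermediate point $\xi$ and pushes you into a Chernoff split $\{N_n\ge np/2\}$ together with a separate small-$p$ regime in order to control $|\phi'(\xi)|$. The paper instead applies the concavity tangent inequality $\phi(x)-\phi(y)\le\phi'(y)(x-y)$ directly at the \emph{known} point $y=N_i/n$; then $|\phi'(N_i/n)|\le\log n$ holds as soon as $N_i\ge 1$, and the independent Bernoulli $Y_i$ is averaged out directly (no Jensen loss) to produce the centered factor $(np_i-N_i)/(n(n+1))$. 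Only the atom $\{N_i=0\}$ then needs separate treatment, contributing $(1-p_i)^n\phi(p_i/(n+1))$, which the paper dispatches by thresholding at $p^*=2\log n/n$. Both routes land on the same $\sqrt{p_i}\log n/n^{3/2}$ main term plus an $O(\log^2 n/n^2)$ residual, but the paper's tangent-at-$N_i/n$ device avoids the unknown $\xi$ and collapses your Chernoff/small-$p$ casework into the single, simpler split $\{N_i\ge 1\}$ versus $\{N_i=0\}$.
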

Using the discrete gradient of $f_{k,p}(n)$, we can establish the following result. 
\lemGsubGammaRangeOne*

\begin{proof} 
Extending the definitions of $f_{k,p}(\cdot), g_{k,p}(\cdot)$ to all $x \ge 0$, we define 
\begin{align}
    f_{k,p}(0) &= 0,  &f_{k,p}(x) &= f_{k,p}(\lf x \rf), \nonumber\\
    g_{k,p}(0) &= 0,   &g_{k,p}(x) &= g_{k,p}(\lf x \rf) \label{eq:Extendedfgdefns}.
\end{align}
We point out that~\eqref{eq:Extendedfgdefns} ensures that $f_{k,p}(x) = \lf x\rf g_{k,p}(\lf x \rf)$ for all $x \ge 0$.

Now, consider for $Y \sim \Bin(n,r)$
\begin{align}
\left|g_{k,p}(Y) - g_{k,p}(nr)\right| &= \left|Y f_{k,p}(Y) - \lf nr \rf f_{k,p} (\lf nr \rf )\right| \nonumber \\
&= \left|Y f_{k,p}(Y) - \lf nr \rf f_{k,p}(Y) + \lf nr \rf f_{k,p}(Y) - \lf nr \rf f_{k,p} (\lf nr \rf )\right| \nonumber\\
&\le |Y-\lf nr \rf|f_{k,p}(Y) + \lf nr \rf |f_{k,p}(Y) - f_{k,p}(\lf nr \rf)| \nonumber\\
&\le (|Y - nr|+1)f_{k,p}(Y) + n |f_{k,p}(Y) - f_{k,p}(\lf nr \rf)|. \label{eq:DiscGradGF}
\end{align}
Now, we have by the Chernoff bound (see for example~\cite[Exercise 2.12]{Boucheron--Lugosi--Massart13}) 
\begin{align}\label{eq:chernoffBdDelta}
    |Y - nr| \le \sqrt{2nr(1-r)\log\frac{2}{\d}} \le \sqrt{\frac{2n}{k}\log \frac{2}{\d}}
\end{align}
with probability $\ge 1-\d$.  

Consider now $0 < \d \le 1$ such that $\log \frac{2}{\d} \le k$. Then, $\sqrt{\frac{2n}{k}\log \frac{2}{\d}} \le \sqrt{2n}$ and consequently, with probability greater than $1-\d$, we have 
\[
Y \ge nr - \sqrt{2n} \ge \frac{n}{2}-\sqrt{2n} \ge \frac{n}{4}.
\]
where the final inequality holds for any $n \ge 32$. Now, if $Y \ge \frac{n}{4} \ge 1$, we have 
\begin{align}
    n|f_{k,p}(Y) - f_{k,p}(\lf nr \rf)| &= n \sum_{i=\max\{Y,\lf nr \rf\}}^{\min\{Y,\lf nr \rf\}} (f_{k,p}(i) - f_{k,p}(i+1)) \label{eq:FDecreasing} \\
    &\le n \sum_{i=\max\{Y,\lf nr \rf \}}^{\min\{Y,\lf nr \rf\}} \left( \frac{\sqrt{k} \log i}{i^{3/2}} + \frac{8k \log^2 i}{i^2}   \right) \label{eq:UseDiscGrad} \\
    &\le (|Y-\lf nr \rf |+1) \left( \frac{8\sqrt{k} \log n}{n^{1/2}} + \frac{128k \log^2 n}{n}   \right) \label{eq:FunctionDecrease} \\
    &\le (|Y- nr |+2)  \frac{16\sqrt{k} \log n}{\sqrt{n}} \label{eq:SampleCompCondition}
\end{align}
where~\eqref{eq:FDecreasing} follows since $f_{k,p}(i) \ge f_{k,p}(i+1)$ for $i \in \Nat$ (\cref{prop:ChainRule}) and a telescoping sum,~\eqref{eq:UseDiscGrad} follows from \cref{lem:DiscGradF}, ~\eqref{eq:FunctionDecrease} follows by noting $i \mapsto \frac{\sqrt{k} \log i}{i^{3/2}} + \frac{8k \log^2 i}{i^2}$ is decreasing for $i \ge 3$ and $\min(Y, \lf r\rf) \ge n/4 \ge 3$, and~\eqref{eq:SampleCompCondition} follows since $\frac{n}{256\log^2 n} \ge k$ ensuring that the first term in~\eqref{eq:SampleCompCondition} is larger. 
Therefore, in the event that $|Y - nr| \le \sqrt{\frac{2n}{k}\log \frac{2}{\d}}$ for $\d$ such that $\log \frac{2}{\d} \le k$, we have from~\eqref{eq:DiscGradGF} 
\begin{align}
    \left|g_{k,p}(Y) - g_{k}(nr)\right| &\le (|Y-nr|+1)f_{k,p}(Y) + 16(|Y-nr|+2)\frac{\sqrt{k}\log n}{n} \nonumber\\
    &\le (|Y-nr|+2)\left(\frac{4k}{n} + \sqrt{\frac{256k \log^2 n}{n}}\right) \label{eq:UseBdOnY} \\
    &\le 32\left(\sqrt{\frac{2n}{k}\log \frac{2}{\d}} + 2\right)\sqrt{\frac{k \log^2 n}{n}} \label{eq:Samplecomplexitytwo} \\
    &\le \sqrt{2048 \log^2 n \left(\log\frac{1}{\d}+1\right)} + 1 \label{eq:samplecomplexitythree} \\
    &\le \sqrt{2048 \log^2 n\log\frac{1}{\d}} + 32\sqrt{2} \log n + 1  \label{eq:sqrtaplusb}
\end{align}
where~\eqref{eq:UseBdOnY} uses that in the given event $Y \ge n/4$ (and thus $f_{k,p}(Y) \le f_{k,p}(n/4)$),~\eqref{eq:Samplecomplexitytwo} and~\eqref{eq:samplecomplexitythree} use that $n$ is large enough to ensure $\frac{n}{4096\log^2 n} \ge k$, and~\eqref{eq:sqrtaplusb} uses the fact that for $a,b > 0$, $ \sqrt{a+b} \le \sqrt{a}+\sqrt{b}$.

We have therefore established that for all $0 < \d \le 1$ such $\log \frac{2}{\d} \le k$, we have 
\begin{align}\label{eq:GBoundPartOne}
    |g_{k,p}(Y) - g_{k,p}(nr)| \le \sqrt{2048 \log^2 n\log\frac{1}{\d}} + 32\sqrt{2} \log n + 1 \text{ with probability } \ge 1-\d.
\end{align}

Now consider all $\d$ such that $\log \frac{2}{\d} \ge k$. In this case, since $g_{k,p}(Y) \in [0,k]$, we have 
\begin{align}\label{eq:GBoundPartTwo}
    |g_{k,p}(Y) - g_{k,p}(nr)| \le \log \frac{2}{\d} \le 1 + \log \frac{1}{\d}. 
\end{align}
Putting together~\eqref{eq:GBoundPartOne} and~\eqref{eq:GBoundPartTwo} we have established that for all $0 < \d \le 1$,
\begin{align}\label{eq:finalGbound}
     |g_{k,p}(Y) - g_{k,p}(nr)| \le  \sqrt{2048 \log^2 n} + 1  + \sqrt{2048 \log^2 n\log\frac{1}{\d}} + \log \frac{1}{\d} \text{    w.p. } \ge 1-\d
\end{align}
and therefore $g_{k,p}(Y) - g_{k,p}(nr)$ satisfies~\eqref{eq:UpperBoundToMGF} in \cref{prop:SubGTailsMoments} with $A = 1024 \log^2 n$ and $B = 1$, implying that $g_{k,p}(Y) - \E[g_{k,p}(Y)] \in \Gamma(1536A + 864B^2, 144B)$, leading to the assertion.

\end{proof}

\subsection{Concentration of \texorpdfstring{$g$}{}: Range 3}\label{subsec:concGlargeN}

When $n \ge \frac{128 (k-1)^2}{\a^2 \sqrt{\a}}$ (i.e. $n$ is in Range 3 corresponding to the notation in~\cref{sec:MainThmPf}), we can establish a sharper version of \cref{lem:GsubGammaRangeOne}. Intuitively, since $g_{k,p}(n) \to \frac{k-1}{2}$, we expect that for a large enough $n$, $\left|g_{k,p}(n) - \frac{k-1}{2}\right|$ is small. The next proposition quantifies this intuition. 
\begin{proposition}[Bound on $g_{k,p}(n)$] \label{prop:Gbound}
Let $n \ge \frac{16 k^2}{\a^2 \sqrt{\a}}$ and $p \in \Delta^{k-1}_{\a}$. Then, $\left|g_{k,p}(n) - \frac{k-1}{2}\right| \le 1$.
\end{proposition}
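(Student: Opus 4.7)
The plan is to use the Bernstein polynomial representation of $g_{k,p}(n)$ supplied by \cref{prop:GFuncBernstein} and extract the leading behaviour of each summand via a Taylor expansion of $\phi(x) = x\log(1/x)$ about $p_i$. Writing $\hat p_i := N_i/n$ with $N_i \sim \Bin(n,p_i)$, we have $g_{k,p}(n) = \sum_{i=1}^k n\,\E[\phi(p_i) - \phi(\hat p_i)]$, so it suffices to show that each summand equals $(1-p_i)/2$ up to an error of order $1/(p_i n)$ plus an exponentially small tail, because then the main pieces assemble to $\sum_{i=1}^k (1-p_i)/2 = (k-1)/2$. The relevant derivatives are $\phi''(x) = -1/x$, $\phi'''(x) = 1/x^2$, $\phi^{(4)}(x) = -2/x^3$; only $\phi^{(4)}$ is genuinely singular at $0$, and $\|\phi\|_\infty \le 1/e$.

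For each $i$, partition on the event $A_i := \{\hat p_i \ge p_i/2\}$. On $A_i$, the Lagrange form of the cubic Taylor remainder is controlled by $|R_i| \le (2/(3p_i^3))(\hat p_i - p_i)^4$, since every intermediate $\xi$ lies in $[p_i/2,1]$. Using the standard Binomial central moments $\E[(\hat p_i - p_i)^2] = p_i(1-p_i)/n$, $\E[(\hat p_i - p_i)^3] = p_i(1-p_i)(1-2p_i)/n^2$, and $\E[(\hat p_i - p_i)^4] \lesssim p_i^2/n^2$, together with the fact that restricting any of these moments to $A_i$ changes them by at most $p_i^j\,\PP(A_i^c) \le p_i^j e^{-np_i/8}$ (multiplicative Chernoff, noting $|\hat p_i - p_i| \le p_i$ on $A_i^c$), we will obtain
\begin{align*}
    n\,\E[\phi(p_i) - \phi(\hat p_i)] \;=\; \frac{1-p_i}{2} \;-\; \frac{(1-p_i)(1-2p_i)}{6\,p_i\,n} \;+\; O\!\left(\frac{1}{p_i n}\right) \;+\; e_i^{\mathrm{tail}},
\end{align*}
where the $O(1/(p_i n))$ absorbs $n\,\E[R_i \indi{A_i}]$ and $e_i^{\mathrm{tail}}$ handles the $A_i^c$ contribution (bounded using $|\phi(\hat p_i) - \phi(p_i)| \le 2/e$) and the moment-truncation corrections.

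Summing over $i=1,\dots,k$, the polynomial error contributes at most an absolute constant times $k/(\alpha n)$; under the hypothesis $n \ge 16 k^2/\alpha^{5/2}$ and the inequality $\alpha \le 1/k$ (forced by $p \in \Delta^{k-1}_{\alpha}$), this is at most $\alpha^{3/2}/(16 k) \le 1/(16 k^{5/2})$, well below $1/2$ with room to spare for the absolute constants. For the exponential tail, $np_i \ge n\alpha \ge 16 k^2/\alpha^{3/2} \ge 16 k^{7/2}$, so even after multiplying by polynomial factors in $n$, $\log(1/p_i)$, or $1/p_i^3$, each $e_i^{\mathrm{tail}}$ is super-polynomially small, and summing over $k$ coordinates still gives a negligible contribution. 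Altogether $|g_{k,p}(n) - (k-1)/2| \le 1$. The main obstacle will be the careful bookkeeping of the quartic-remainder bound: the singular factor $1/p_i^3$ must be tamed by the restriction to $A_i$ and by the $O(p_i^2/n^2)$ size of $\E[(\hat p_i - p_i)^4]$, and it is precisely this balance between the singularity of $\phi^{(4)}$ at $0$ and the fourth Binomial moment that forces the specific exponent $\alpha^{5/2}$ in the sample-size threshold.
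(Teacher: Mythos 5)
Your proposal is correct and follows the same overall template as the paper (Bernstein representation via \cref{prop:GFuncBernstein}, Taylor expansion of $\phi$ about $p_i$, Chernoff bound for the tail), but with a genuine refinement in how the remainder is handled. The paper stops at second order: it defines $R_i(x) = x\log(x/p_i) - (x-p_i) - (x-p_i)^2/(2p_i)$, bounds $|R_i(\hat p_i)|$ pointwise via $\phi'''$, and then controls $\E|R_i(\hat p_i)|$ by the \emph{absolute} third moment $\E|\hat p_i - p_i|^3 \le (\E(\hat p_i - p_i)^4)^{3/4} = O(p_i^{3/4}/n^{3/2})$, which costs a $1/\sqrt n$ decay rate (giving $|g_{k,p}(n)-(k-1)/2| \lesssim k/(\alpha^{5/4}\sqrt n)$). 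You instead go to third order, computing the \emph{signed} cubic moment $\E[(\hat p_i - p_i)^3] = p_i(1-p_i)(1-2p_i)/n^2$ exactly, and control only the quartic remainder via $\E[(\hat p_i - p_i)^4] \lesssim p_i^2/n^2$ (valid here since $np_i \ge n\alpha \ge 16k^{7/2} \gg 1$); this sidesteps the loss incurred by passing to absolute values and yields the sharper polynomial error $O(k/(\alpha n))$. The price of this is the explicit partition on $A_i = \{\hat p_i \ge p_i/2\}$ and the moment-truncation corrections, which the paper avoids by bounding the remainder pointwise (using the crude $n^2/6$ bound near $0$, then crushing it with the Chernoff probability). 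Your route is slightly sharper, but requires the additional truncation bookkeeping which you outline but do not fully execute; both are sound. One small caveat: your closing sentence attributing the $\alpha^{5/2}$ threshold to the balance between $\phi^{(4)}$'s singularity and the fourth binomial moment is misleading — in your own argument $n \gtrsim k/\alpha$ already suffices, so the stated hypothesis is met with a large margin; the $\alpha^{5/2}$ threshold reflects the paper's weaker $1/\sqrt n$ estimate, not a constraint intrinsic to the quartic remainder.
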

\begin{proof}
Define for $x \in \{0\} \cup \left[\frac{1}{n}, 1\right]$
\begin{align}\label{eq:RemainderDefn}
    R_i(x) = x \log \frac{x}{p_i} - (x-p_i) - \frac{(x-p_i)^2}{2p_i}.
\end{align}
We then have $R_i(0) = \frac{p_i}{2}$.
When $\frac{1}{n} \le x \le \frac{p_i}{2}$, we use the Taylor theorem to assert 
\begin{align}\label{eq:TaylorThmPhi}
    \phi(x) = \phi(p_i) + \phi'(p_i)(x-p_i) + \frac{\phi''(p_i)}{2}(x-p_i)^2 + \frac{\phi'''(c)}{6}(x-p_i)^3
\end{align}
where $c \in [x,p_i]$. Recalling that $\phi(p_i) = p_i \log \frac{1}{p_i}, \phi'(p_i) = \log \frac{1}{p_i}-1, \phi''(p_i) =  -1/p_i$ and $\phi'''(c) = 1/c^2$ we can rewrite~\eqref{eq:TaylorThmPhi} as 
\begin{align}
    x \log \frac{1}{x} &= p_i \log \frac{1}{p_i} + (x-p_i)\log \frac{1}{p_i} -(x-p_i) - \frac{(x-p_i)^2}{2p_i} + \frac{1}{6c^2}(x-p_i)^3 \nonumber\\
    &=  x \log \frac{1}{p_i} -(x-p_i) -\frac{(x-p_i)^2}{2p_i} + \frac{1}{6c^2}(x-p_i)^3 \nonumber
\end{align}
and rearranging we get 
\[
R_i(x) = x\log \frac{x}{p_i}-(x-p_i) -\frac{(x-p_i)^2}{2p_i} =  \frac{-1}{6c^2}(x-p_i)^3 
\]
for a $c \in [x,p_i]$. Now, if $1/n \le x \le p_i/2$, we have $|R_i(x)| \le \frac{n^2}{6}|x-p_i|^3 \le \frac{n^2}{6}$. Furthermore, if $x \ge p_i/2$, we have $c \ge p_i/2$,%
and therefore $|R_i(x)| \le \frac{2}{3p_i^2}|x-p_i|^3$. We have therefore established that 
 \[
    |R_i(x)| \le \Bigg\{ \begin{array}{lr}
        p_i/2 & \text{for } x = 0\\
        n^2/6 & \text{for } 1/n \leq x\leq p_i/2\\
        (2/3p_i^2)|x-p_i|^3 & \text{for } x > p_i/2
        \end{array}.
  \]
Now, by definition of $R_i(x)$, we have 
\begin{align*}
    \sum_{i=1}^k \ph_i \log \frac{\ph_i}{p_i} - (\ph_i - p_i) - \frac{(\ph_i-p_i)^2}{2p_i} &= \sum_{i=1}^k R_i(\ph_i).
\end{align*}
Taking expectation on both sides yields%
\[
\frac{g_{k,p}(n)}{n} - \frac{k-1}{2n} = \sum_{i=1}^k \E[R_i(\ph_i)]
\]
and by the triangle inequality, 
\begin{align}\label{eq:TriangleIneqRi}
\left|\frac{g_{k,p}(n)}{n} - \frac{k-1}{2n}\right| \le \sum_{i=1}^k \E|R_i(\ph_i)|.
\end{align}
Now, using the upper bounds on $R_i(x)$ we have 
\begin{align}
    \E[|R_i(\ph_i)|] &\le \frac{p_i}{2}\E\left[\indic{\ph_i = 0}\right] + \frac{n^2}{6}\E\left[\indic{\frac{1}{n} \le \ph_i \le \frac{p_i}{2}}\right] + \frac{2}{3p_i^2}\E\left[|p_i - \ph_i^3| \indic{\ph_i \ge \frac{p_i}{2}}\right] \nonumber\\
    &\le \frac{n^2}{6}\Prob(\ph_i \le p_i/2) + \frac{2}{3p_i^2} \E|\ph_i - p_i|^3. \label{eq:UseChernoffBd}
\end{align}
Now we have by the Chernoff Bound~\cite[Theorem 4.5]{Mitzenmacher--Upfal2017}
\begin{align}\label{eq:Chernoffpi}
\Prob(\ph_i \le p_i/2) = \Prob(\ph_i \le p_i - p_i/2) \le \exp\left(-np_i/8\right) \le \exp\left(-n\a/8\right)
\end{align}
and by moments of the binomial distribution~\cite{Knoblauch2008}
\begin{align}\label{eq:BinomialThirdCentralMoment}
    \E|\ph_i - p_i|^3 \le  \left(\E|\ph_i - p_i|^4\right)^{3/4} \le \left(\frac{3p_i(1-p_i)}{n^2}\right)^{3/4} \le \frac{3p_i^{3/4}}{n\sqrt{n}}.
\end{align}
Using~\eqref{eq:Chernoffpi} and~\eqref{eq:BinomialThirdCentralMoment} in~\eqref{eq:UseChernoffBd} yields 
\begin{align}\label{eq:RiExptBd}
 \E[|R_i(\ph_i)|] \le \frac{n^2}{6}\exp(-n\a/8) + \frac{2}{p_i^{5/4}}\frac{1}{n\sqrt{n}} \le \frac{n^2}{6}\exp(-n\a/8) + \frac{2}{\a^{5/4}}\frac{1}{n\sqrt{n}}
\end{align}
and since for $n \ge \frac{16k^2}{\a \sqrt{\a}}$ we have $\frac{n^2}{6}\exp(-n\a/8) \le \frac{2}{\a^{5/4}}\frac{1}{n\sqrt{n}}$, we have 
\[
\E[|R_i(\ph_i)|] \le \frac{4}{\a^{5/4}}\frac{1}{n\sqrt{n}}.
\]
Using this in~\eqref{eq:TriangleIneqRi} and multiplying by n, we get 
\[
\left|g_{k,p}(n) - \frac{k-1}{2}\right| \le \frac{4k}{\a^{5/4}\sqrt{n}} \le 1
\]
for $n \ge \frac{16k^2}{\a^2 \sqrt{\a}}$.
\end{proof}

Now, using \cref{prop:Gbound}, we see that $g_{k,p}(n)$ is fairly constant for large enough $n$. Therefore, using the fact that bounded random variables are sub-Gaussian, we can establish the following concentration inequality. 
\lemGConcLargeN*
\begin{proof}
We claim that 
\begin{align}\label{eq:gConcLargeN}
    \Prob\left(\left|g_{k,p}(Y) - \frac{k-1}{2}\right| > t\right) \le 2e^{-t^2/4}
\end{align}
for all $t \ge 0$. To establish this we consider three cases.
First, whenever $t \le 1, 2e^{-t^2/4} > 1$ and therefore~\eqref{eq:gConcLargeN} is trivially true. 

When $1 < t \le k$, we know that if $Y \ge \frac{n}{3} = \frac{16 k^2}{\a^2 \sqrt{\a}}$, then we have $|g_{k,p}(Y) - \frac{k-1}{2}| \le 1$ from \cref{prop:Gbound}. Thus, for any $t \ge 1$ we can bound
\[
 \Prob\left(\left|g_{k,p}(Y) - \frac{k-1}{2}\right| > t\right) \le \Prob\left(Y \le \frac{n}{3}\right) \le e^{-n/36}
\]
from the Chernoff bound and that $r \ge 1-\frac{1}{k} \ge \frac{1}{2}$. Moreover, $e^{-n/36} \le e^{-t^2/4}$ since $t \le k \le \frac{\sqrt{n}}{3}$, and therefore~\eqref{eq:gConcLargeN} holds.  

Finally, whenever $t \ge k$, since $0 \le g_{k,p}(Y) \le k$, we have $\left|g_{k,p}(Y) - \frac{k-1}{2}\right| \le k$  with probability 1. Therefore in this case $\Prob\left(\left|g_{k,p}(Y) - \frac{k-1}{2}\right| > t\right) = 0 \le 2e^{-t^2/4}$ and~\eqref{eq:gConcLargeN} holds in this case too. The result then follows by using the centering lemma~\cite[Lemma 2.6.8]{vershynin2018high}. 
\end{proof}

\section{Concluding Remarks}\label{sec:discussion}

In this work we studied concentration inequalities for the centered relative entropy between the empirical distribution and the true (multinomial) distribution of alphabet size $k$ and minimum probability $\a > 0$. We showed that the centered relative entropy is sub-Gamma with variance parameter $\nu \lesssim k \log^4\left(\frac{k}{\a}\right)$ and shape parameter $c \lesssim 1$. This matches the $\chi^2_{k-1}$ asymptotics up to the $\log^4\left(\frac{k}{\a}\right)$ factor. Two remaining problems thus readily present themselves: firstly, removing any dependence on $\a$ in the logarithmic term in the variance factor (as the convergence in distribution to $\chi^2_{k-1}$ holds for arbitrary $\a > 0$); and secondly the more stringent question of establishing that $\nu \lesssim k$ (i.e. removing the extraneous logarithmic factor altogether). 
 \appendix
\section{Proofs Omitted From \texorpdfstring{\cref{sec:prelim}}{}}
\label{app:proofprelim}

\propSubGTailsMomentsII*
\begin{proof}
We will follow the same strategy as \cite[Theorem 2.3]{Boucheron--Lugosi--Massart13} on $X-\E[X]$.
First of all, note that by the Jensen inequality and the condition~\eqref{eq:MomentToMGF}
\begin{align}\label{eq:ExpectationXSq}
|\E[X]| \leq \E|X| \leq \sqrt{\E X^2} \leq \sqrt{A + 2B^2}.
\end{align}

Consider now the first assertion. We have 
\begin{align}\label{eq:RawMomentXBd}
    (\E[X])^{2q} \leq (A + 2B^2)^{q} \leq 2^{q-1}(A^{q} + 2^qB^{2q}) \le (2A)^{q} + (2B)^{2q},
\end{align}
where the first inequality follows from~\eqref{eq:ExpectationXSq} and the second follows by the convexity of $x \mapsto x^q$ since $q \ge 1$.
Next, we observe
\begin{align}
    \E[(X-\E[X])^{2q}] &\leq 2^{2q - 1}\left(\E[X^{2q}] + (\E[X])^{2q} \right) \label{eq:jensenPower} \\
                &\le 2^{2q} \left( q!A^q +  (2q)!B^{2q} + (2A)^{q} + (2B)^{2q}  \right) \label{eq:SubstituteRawMomentBd}\\
                &\leq q!(6A)^q + (2q)!(3B)^{2q}   \label{eq:factorialsCalc}
\end{align}
where~\eqref{eq:jensenPower} follows by the Jensen inequality,~\eqref{eq:SubstituteRawMomentBd} follows by~\eqref{eq:RawMomentXBd} and~\eqref{eq:factorialsCalc} follows since $q!(4A)^q + (2A)^q \le q!(6A)^q, (2q)!B^{2q} + (2B)^{2q} \le (2q)!(3B)^{2q}$. We can now see that $X - \E[X]$ is a centered random variable that satisfies the conditions of \cite[Theorem 2.3]{Boucheron--Lugosi--Massart13} with $A' = 6A$, $B' = 3B$. Thus $Z \in \Gamma(4(A' + B'^2),2B') = \Gamma(24A + 36B^2,6B)$.

We will now focus on the second condition and calculate the moments of $|X|$. From~\eqref{eq:UpperBoundToMGF}, we see that $\Prob( (|X| - \sqrt{2A} -  B)_+ \geq \sqrt{2A t} + Bt) \leq \exp(-t)$ for all $t > 0$. Using the same analysis as in the proof of \cite[Theorem 2.3]{Boucheron--Lugosi--Massart13}, we get that for all integers $q\geq 1$, 
\begin{align*}
    \E \left[\left(|X| - \sqrt{2A} -  B\right)_{+}\right]^{2q} \leq q!(8A)^{q} + (2q)!(4B)^{2q}.
\end{align*}
Next since $|X| \le \left(|X| - \sqrt{2A} -  B\right)_{+} + \sqrt{2A}+B$ using similar arguments as in~\eqref{eq:jensenPower}---\eqref{eq:factorialsCalc} we have,
\begin{align*}
    \E X^{2q} &\leq 2^{2q} \E \left[\left(|X| - \sqrt{2A} -  B\right)_{+}\right]^{2q} + 2^{2q} (\sqrt{2A} + B)^{2q}  \\
            &\leq q! (32A)^{q} + (2q)! (8B)^{2q} + 4^{2q} (2A)^{q} + 4^{2q}B^{2q} \\
            &\leq q! (64A)^q + (2q)!(24B)^{2q}.
\end{align*}
We have now reduced the problem to the first case and by the result therein we have $X - \E [X] \in \Gamma(1536 A + 864B^2, 144B)$.
\end{proof}
\propGFuncBernstein*
\begin{proof}
\begin{align}
    g_{k,p}(n) &= n\E[D(\ph_{n,k}\|p)] \nonumber\\
    &= n\E\left[\sum_{i=1}^k\ph_i \log \frac{\ph_i}{p_i}\right] \nonumber\\
    &= n\sum_{i=1}^k\left(\E\left[\ph_i \log \frac{1}{p_i}\right] - \E\left[\ph_i \log \frac{1}{\ph_i}\right]\right) \nonumber\\
    &= n\sum_{i=1}^k \left(p_i \log \frac{1}{p_i} - \E\left[\ph_i \log \frac{1}{\ph_i}\right]\right). \label{eq:ExptGSimplify}
\end{align}
Since $n\ph_i \sim \Bin(n,p_i)$, we note that 
\begin{align}\label{eq:PhiBernApprox}
    \E\left[\ph_i \log \frac{1}{\ph_i}\right] &= \E[\phi(\ph_i)] = \sum_{k=0}^n \phi(k/n)p_i^k(1-p_i)^{n-k} = B_n(\phi,p_i).
\end{align}
Substituting~\eqref{eq:PhiBernApprox} into~\eqref{eq:ExptGSimplify} we obtain the assertion.
\end{proof}
\propFProperties*
\begin{proof}
For the first assertion, we have $D(\ph_{k,p}\|p) \le \log(1+\chi^2(\ph_{k,p}\|p)) \le \chi^2(\ph_{k,p}\|p)$, where the first inequality follows from e.g.~\cite[Chapter 7]{Polyanskiy--Wu14}  and second inequality follows since $\log(1+x) \le x$ for $x \ge 0$; this was proved in~\cite{Paninski2003} (see also~\cite{Jiao--Venkat--Han--Weissman2017}).%
The second assertion follows since from \cref{prop:GFuncBernstein} $f_{k,p}(n) - f_{k,p}(n+1) = \sum_{i=1}^k \left(B_{n+1}(\phi,p_i) - B_{n}(\phi,p_i)\right)$ %
, and we have $B_{n+1}(\phi,p_i) \ge B_{n}(\phi,p_i)$ for all $0 \le p_i \le 1$ (from \cite[Lemma 4.3]{WuYang2020} and concavity of $\phi(\cdot)$).
\end{proof}
\section{Proofs Omitted From \texorpdfstring{\cref{sec:MainThmPf,sec:LemmasForProof}}{}}
\label{app:LemmasForProof}
\LemBddDiffsKL*
\begin{proof}
We can see that $Z_{n,k,p} = 2n D(\ph_{n,k}|| p)$ is a function of $Z_1,\dotsc,Z_n$, all i.i.d., where $\Prob(Z_1 = j) = p_j$. Denoting this function by $h(\cdot)$, we have 
\begin{align}
    2n D(\ph_{n,k}\| p) = h(Z_1,\dotsc,Z_n) = 2n \sum_{i=1}^k \ph_i \log \frac{\ph_i}{p_i} \nonumber .
\end{align}
Let $Z_i = j \in [k]$, and let $Z_i' = j' \in [k]$. 
We then have 
\begin{align}
    &\frac{1}{2n}\left|h(Z_1,\dotsc,Z_i,\dotsc,Z_n) - h(Z_1,\dotsc,Z_i',\dotsc,Z_n)\right| \nonumber\\
    &\quad= \left|\ph_j \log \frac{\ph_j}{p_j} + \ph_{j'}\log \frac{\ph_{j'}}{p_{j'}} - \left(\ph_j - \frac{1}{n} \right)\log \frac{\left(\ph_j-\frac{1}{n}\right)}{p_j} - \left(\ph_{j'} + \frac{1}{n} \right)\log \frac{\left(\ph_{j'}+\frac{1}{n}\right)}{p_j} \right| \nonumber\\
    &\quad= \left|\phi\left(\ph_j - \frac{1}{n}\right) - \phi(\ph_j) +  \phi(\ph_{j'}) - \phi\left(\ph_{j'} + \frac{1}{n}\right) + \frac{1}{n} \log \frac{1}{p_j} - \frac{1}{n}\log\frac{1}{p_{j'}}  \right|\label{eq:ResamplingDiff} \\
    &\quad\le \left| \phi\left(\ph_j - \frac{1}{n}\right) - \phi(\ph_j) \right| + \left| \phi(\ph_{j'}) - \phi\left(\ph_{j'} + \frac{1}{n} \right) \right| + \frac{2}{n}\log\frac{1}{\a} \label{eq:PiMin}
\end{align}
where~\eqref{eq:ResamplingDiff} follows by recalling that $\phi(x) = x \log \frac{1}{x}$ and~\eqref{eq:PiMin} follows from the triangle inequality and the fact that $p_j, p_{j'} \ge \a$.
Now, for all integers $2 \le l \le n$ we have, by concavity of $\phi(\cdot)$ that 
\begin{align*}%
   \left|\phi\left(\frac{l}{n}\right) - \phi\left(\frac{l-1}{n}\right)\right|  \le \log\frac{n}{e(l-1)}\cdot\frac{1}{n} \le \frac{\log n}{n}
\end{align*}
and also $\phi\left(\frac{1}{n}\right) - \phi(0) = \phi\left(\frac{1}{n}\right) = \frac{\log n}{n}$. Using this and the fact that $\ph_j, \ph_{j'}+\frac{1}{n} \in \left\{\frac{1}{n},\dotsc,\frac{n}{n}\right\}$ in~\eqref{eq:PiMin} we have 
\[
\frac{1}{2n}\left|h(Z_1,\dotsc,Z_i,\dotsc,Z_n) - h(Z_1,\dotsc,Z_i',\dotsc,Z_n)\right| \le \frac{2 \log(n/\a)}{n}.
\]
Now using the bounded differences inequality~\cite[Theorem 6.2]{Boucheron--Lugosi--Massart13} we have 
\[
\Prob\left(\left|Z_{n,k,p} - \E Z_{n,k,p} \right| > t\right) \le 2\exp\left(-\frac{t^2}{8n \log^2(n/\a)}\right).
\]
This implies that (see, for example,~\cite[Theorem 3.10]{duchi2016lecture} or~\cite{Boucheron--Lugosi--Massart13}) %
whenever $p \in \Delta^{k-1}_a$, for all $t$ we have

\[
   \psi_{Z_{n,k,p} - \E[Z_{n,k,p}]}(t)  \le 27 n \log^2\left(\frac{n}{\a}\right) t^2 .
\]

\end{proof}
\lemDiscGradF*
\begin{proof}
From \cref{prop:fProperties}, we have $f_{k,p}(n) \ge f_{k,p}(n+1)$ for all $n$. To prove the upper bound, we recall 
\begin{align}
f_{k,p}(n) - f_{k,p}(n+1) &=   \sum_{i=1}^k(B_{n+1}(\phi,p_i) - B_n(\phi,p_i))  \label{eq:SumOfBern}.
\end{align}
Fix an index $i \in [k]$. Let $N_i \sim \text{Bin}(n,p_i)$ and $Y_i \sim \text{Ber}(p_i)$ be two independent random variables. By definition, we have $B_n(\phi,p_i) = \Expt\left[\phi\left(\frac{N_i}{n}\right)\right]$ and $B_{n+1}(\phi,p_i) = \Expt\left[\phi\left(\frac{N_i+Y_i}{n+1}\right)\right]$. Moreover, since $\phi(x) = x\log(1/x)$ is concave on $[0,1]$, we have for $x,y \in (0,1]$ that $\phi(x) - \phi(y) \leq \phi'(y)(x- y) = \log(1/(ey)) (x-y)$. Therefore, 
\begin{align}
B_{n+1}(\phi,p_i) - B_n(\phi,p_i) &= \E\left[ \phi\left( \frac{N_i + Y_i}{n+1} \right) - \phi \left( \frac{N_i}{n} \right) \right] \nonumber\\
&= \E\left[\indic{N_i\geq 1} \left( \phi\left( \frac{N_i + Y_i}{n+1} \right) - \phi \left( \frac{N_i}{n} \right) \right) \right] + \E\left[\indic{N_i=0} \left( \phi\left( \frac{N_i + Y_i}{n+1} \right) - \phi \left( \frac{N_i}{n} \right) \right) \right] \nonumber \\
&= \E\left[\indic{N_i\geq 1} \left( \phi\left( \frac{N_i + Y_i}{n+1} \right) - \phi \left( \frac{N_i}{n} \right) \right) \right] + (1 - p_i)^n\E\left[  \left( \phi\left( \frac{Y_i}{n+1} \right) \right) \right] \label{eq:NYIndep}\\
&\leq  \E \left[\indic{N_i\geq 1} \log\left(\frac{n}{e N_i}\right) \left(\frac{N_i+Y_i}{n+1} - \frac{N_i}{n}\right)  \right] + (1 - p_i)^n  \phi\left( \frac{p_i}{n+1}  \right) \label{eq:ConcavityPhi}
\end{align}
where~\eqref{eq:NYIndep} uses the fact that $N_i \Indep Y_i$ and $\Prob(N_i = 0) = (1-p_i)^n$, and~\eqref{eq:ConcavityPhi} follows from concavity of $\phi(\cdot)$ and the Jensen inequality.  We will now focus on the first expression in~\eqref{eq:ConcavityPhi}. We have
\begin{align}
\E \left[\indic{N_i\geq 1} \log\left(\frac{n}{e N_i}\right) \left(\frac{N_i+Y_i}{n+1} - \frac{N_i}{n}\right)  \right]    			&=  \E \left[ \indic{N_i\geq 1}\log\left(\frac{n}{e N_i}\right) \left(\frac{nY_i - N_i}{n(n+1)} \right) \right] \nonumber \\
&=  \E \left[\indic{N_i\geq 1}\log\left(\frac{n}{e N_i}\right) \left(\frac{ np_i - N_i}{n(n+1)} \right) \right] \label{eq:IndepNYExp}\\
&\leq \log(n)  \E \left[ \indic{N_i\geq 1}\left|\frac{ np_i - N_i}{n(n+1)} \right| \right] \label{eq:NiGeqOne}\\
&\leq \frac{\log n}{n^2} \sqrt{\E \left[ \left| np_i - N_i \right|^2 \right]}  \label{eq:JensenNi}\\
&= \frac{\log n}{n^2} \sqrt{ np_i(1 - p_i)}  \label{eq:BinVar}
\end{align}
where~\eqref{eq:IndepNYExp} follows since $N_i \Indep Y_i$,~\eqref{eq:NiGeqOne} uses that $N_i \ge 1$,~\eqref{eq:JensenNi} uses the Jensen inequality and~\eqref{eq:BinVar} uses the fact that variance of $N_i$ is $np_i(1-p_i)$.
Summing up~\eqref{eq:BinVar} from 1 through $k$ we have 
\begin{align}
  \sum_{i=1}^k  \E \left[\indic{N_i\geq 1} \log\left(\frac{n}{e N_i}\right) \left(\frac{N_i+Y_i}{n+1} - \frac{N_i}{n}\right)  \right]    &\le \frac{\log n}{n^{3/2}} \sum_{i=1}^k \sqrt{ p_i(1 - p_i)} \nonumber \\ &\leq \frac{\log n}{n^{3/2}}\sqrt{k-1}  \label{eq:SumOfFirstTermDiscGrad}
\end{align}
where we use the Cauchy--Schwartz inequality. Focusing now on the second term in~\eqref{eq:ConcavityPhi} and defining $p^* = \frac{2 \log n}{n}$ we have
\begin{align}
    \sum_{i=1}^k (1 - p_i)^n \phi\left(\frac{p_i}{n+1}\right) &\leq \sum_{i=1}^k e^{-np_i} \phi\left(\frac{p_i}{n+1}\right) \label{eq:PowerExpIneq}\\
    &= \sum_{i: p_i \geq p^*}^k e^{-np_i} \phi\left(\frac{p_i}{n+1}\right) +\sum_{i: p_i < p^*}^k e^{-np_i} \phi\left(\frac{p_i}{n+1}\right) \nonumber \\
    &\le \sum_{i: p_i \geq p^* }^k \frac{1 }{n^{2}}  + \sum_{i: p_i < p^*}^k \phi\left(\frac{p_i}{n+1}\right) \label{eq:phiUB}\\
    &\leq \frac{k}{n^{2}} + k \phi\left(\frac{p^*}{n+1}\right) \label{eq:phiMontone}\\
    &\le \frac{k}{n^{2}} + \frac{4k \log^2 n}{n^2} \label{eq:SubstitutePStar} \\
    &\le  \frac{8k \log^2 n}{n^2}, \label{eq:secondtermsumbd}
\end{align}
where~\eqref{eq:PowerExpIneq} uses that $(1-x)^n \le e^{-nx}$;~\eqref{eq:phiUB} uses the fact that $\phi(x) \le 1$ for $x \in [0,1]$, $e^{-np_i}$ is always decreasing in $p_i$ and is $\le 1$;~\eqref{eq:phiMontone} follows since $\phi(\cdot)$ is increasing in $\left(0,\frac{p^*}{n+1}\right)$ (as $\frac{p^*}{n+1} = \frac{2 \log n}{n(n+1)} \le \frac{1}{e}$); and~\eqref{eq:SubstitutePStar} follows by substituting $p^* = 2\log n/n$ and some algebra. Finally,~\eqref{eq:SumOfFirstTermDiscGrad} and~\eqref{eq:secondtermsumbd} along with~\eqref{eq:ConcavityPhi} and~\eqref{eq:SumOfBern} yield the assertion.
\end{proof}

\resClaimAgrRawConf*

\begin{proof}
The result in \cite{Agrawal2020} implies the following: for all $t \geq 2(k-1)$
\begin{align}
\nonumber\P\{Z \geq t\} \leq \exp(-t/2) \left( \frac{e t}{2(k-1)} \right)^{k-1} &= \exp\left( - \frac{t}{2} + k \log\left( \frac{et}{2 (k-1)} \right) \right) \\
&\leq  \exp\left( - \frac{t}{2} + k + k\log\left( \frac{t}{k} \right) \right),
\label{eq:rawMomFirst}
\end{align}
where the last expression uses that $2(k-1) \geq k$.
Let $t = 2x k + 2x \log(1/\delta)  + 2 \log(1/\delta) + 2k$ for $x \geq 100$ and 
 $\gamma = \log(1/\delta)/k$. 
 We have that the expression in \eqref{eq:rawMomFirst} is bounded above by:
\begin{align*}
\exp\left( - \frac{t}{2} + k + k\log\left( \frac{t}{k} \right) \right)
 &= \delta\exp\left(-\left((x - \log 2)k + x \log(1/\delta) - k \log\left( (1 + x) + (x + 1)\gamma  \right) \right)  \right)\\ 
 &\leq \delta\exp\left(-\left( 0.5xk + x\log(1/\delta) - k \log\left( 2xk + 2x\gamma  \right) \right)  \right) \\
\qquad \qquad &= \delta\exp\left(- k \left(\underbrace{ 0.5x - \log(2x) + x \gamma - \log\left( 1 + \gamma\right)}_{A(\gamma ,x)} \right)  \right).
\end{align*}
We will now show that $A(\gamma,x) \geq 0$ for $ \gamma \geq 0$ whenever $x \geq 100$.
Using that $0.25x \geq \log(2x)$ and $x \geq 1$ and $\log(1 + \gamma) \leq \gamma$, we have that 
\begin{align*}
A(\gamma,x) &=  0.5x - \log(2x) + x \gamma - \log\left( 1 + \gamma\right) \geq 0.25x +  \gamma - \log(1 + \gamma) \geq 0.
\geq 0.25x \geq 0.\end{align*}
Taking $x = 100$,  we thus have that with probability $1 - \delta$,
$Z \leq t = 2(x+1)k + 2(x + 1)\log(1/\delta) \leq 4x k + 4x \log(1/\delta)  $, completing the proof with $C = 400$.
\end{proof}

 \bibliographystyle{IEEEtran}
 \bibliography{KLConc.bib}

\end{document}